\tikzset{ font={\fontsize{9pt}{12}\selectfont}}%%size letter tics figures
\newtheorem{theorem}{Theorem}[section]
\newtheorem{proposition}[theorem]{Proposition}
\newtheorem{lemma}[theorem]{Lemma}
\newtheorem{definition}[theorem]{Definition}
\newtheorem{remark}[theorem]{Remark}
\newcommand{\C}{\mathbb{C}}
\newcommand{\WC}{\widehat{\mathbb{C}}}
\newcommand{\R}{\mathbb{R}}
\newcommand{\N}{\mathbb{N}}
\newcommand{\com}{\mathbb{C}}
\newcommand{\wcom}{\widehat{\mathbb{C}}}
\begin{document}

\title{Dynamics of Newton-like root finding methods }
\author{B. Campos, J. Canela and \footnote{Corresponding author} P. Vindel \\
   \footnotesize Instituto de Matem\'aticas y Aplicaciones de Castell\'on, \\
  \footnotesize    Universitat Jaume I, Castell\'on, Spain \\
  \footnotesize  campos@uji.es,  canela@uji.es, vindel@uji.es}

\date{}
\maketitle

\begin{abstract}

When exploring the literature, it can be observed that the operator obtained when applying \textit{Newton-like} root finding algorithms to the quadratic polynomials $z^2-c$ has the same form regardless of which algorithm has been used.  In this paper we justify why this expression is obtained.  
This is done by studying the symmetries of the operators obtained after applying Newton-like algorithms to a family of  degree $d$ polynomials $p(z)=z^d-c$.

 Moreover,  we provide an iterative procedure  to obtain the expression of new Newton-like algoritms.

We also carry out a dynamical study of the given generic  operator and provide general conclusions of this type of methods.
\end{abstract}

\vspace{1cm} 
\textbf{Keywords} Iterative methods; Newton-like algorithms; Complex dynamics of rational functions

\vspace{0.5cm}
\textbf{Mathematics Subject Classification (2010)} 65F10 37F10 30C10

\section{Introduction}

Numerical methods allow finding solutions of non-linear equations that cannot be solved by algebraic procedures.  The development and improvement of these methods and their behaviour constitute a field of intense research and a vast literature related to this topic can be found; see for example \cite{mcnamee2013numerical}, \cite{ostrowski2016solution},   \cite{PETKOVIC2013},  \cite{petkovic},  \cite{traub1982},  and references therein.

Iterative methods solve non-linear equations by generating successive approximations that may eventually converge to the solution. The so-called one-point-methods (see \cite{traub1982}) start with an initial guess of the solution to proceed iteratively so that each approximation is used to obtain the next one, until a desired level of convergence is reached. That is, an approximation of the solution $x^*$ of an equation $f(x)=0 $ can be found by applying an iterative scheme of the form:
\[
x_{k+1}=\phi (x_k), \ \ k=0,1,2,...
\]
where $x_k$ is an approximation of $x^*$ and $\phi$ is the iteration operator. The function $\phi$ may depend on derivatives of $f$ in order to increase the order of convergence.

The simplest and most popular method using this scheme is the well-known Newton’s method, also known as Newton–Raphson's method (see \cite{Blanchard94}), given by:
\[
x_{k+1}=N_f(x_k):=x_k-\frac{f(x_k)}{f'(x_k)}.
\]
To improve the efficiency of one-point methods, multi-point methods were introduced.  For them,  each step is not exclusively based on the previous iteration, but also includes information of intermediate evaluations.  These  schemes, mainly variants of Newton’s method,  have recently gained interest because they provide root-finding algorithms which improve both convergence order and computational efficiency in comparison with one-point ones. Such advantages allow  to optimize computational resources. However, the radii of convergence
which ensure that the solution of the method is correct  decrease when the order of the method increases (see \cite{radioCov}, for example).

One way to assess the balance between benefits and drawbacks is to study the dynamical behaviour of these  methods. By considering a discrete dynamical system and performing a qualitative study of it, we can obtain dynamical spaces that identify the regions where the method displays good behaviour. Moreover, if the method has parameter dependence constituting a family of methods, we can plot the corresponding parameter spaces  to find the members of the family with better behaviour (see \cite{CCV-gradon},  \cite{CTV-gato}, for example).

The first works in this sense (  \cite{Kneisl}, for example) show the dynamical planes of the best known numerical  methods acting on polynomials of degree two or three. In \cite{CTV-gato} the authors began the dynamical study of  the one-parameter family called Chebyshev-Halley. 

In the present paper we study multipoint Newton-like methods, that is, methods where the intermediate evaluations are variations of Newton's scheme.

As can be observed in the literature (see \cite{AM-pg5},  \cite{CCV-gradon+k},  \cite{CCTV-familiac}, \cite{CV-OstrowskiChun},  \cite{CGTVV-king},  \cite{CGMT-pg4},  \cite{CTV-gato},   \cite{MA-pg2},   \cite{ZCT-pg8}, for example), when  Newton-like algorithms are applied to the quadratic polynomials $p(z)=z^2-c$, the operator obtained is conjugated (via a M\"obius map that sends the $\pm\sqrt{c}$ to 0 and $\infty$) with a rational map which has the following generic expression:
\begin{equation}\label{operador}
	O(z)=z^n \frac{a_k+a_{k-1} z+...+a_{1} z^{k-1}+ z^k}{1+a_{1} z+...+a_{k-1} z^{k-1}+a_k z^k}.
\end{equation}

The main goal of this paper is to justify why this kind of operator is obtained regardless of the Newton-like algorithm used. This is done in Theorem \ref{teorema}. This theorem follows from a more general symmetry property of the iterative schemes obtained from Newton-like algorithms applied to the degree $d$ polynomials $p(z)=z^d-c$ (Theorem~\ref{prop:symn}). The symmetry was noted by Chun et al. \cite{CHUN2012}, but they didn’t draw conclusions.

The idea of how to justify that operator \eqref{operador} is obtained when applying Newton-like algorithms to $p(z)=z^2-c$ is the following. In Theorem~\ref{prop:symn} we describe a recurrent procedure which can be used to describe the operators obtained from Newton-like root-finding algorithms when applied to the degree $d$ polynomials  $p(z)=z^d-c$. Moreover, we prove that the maps obtained fromTheorem~\ref{prop:symn} are symmetric with respect to rotation by a $d$th root of the unity. By restricting to $d=2$ and applying a  conjugacy, we conclude (Theorem \ref{teorema}) that the operators obtained for the different families of  Newton-like methods when they are applied on quadratic polynomials are given by Equation~(\ref{operador}).

In \S~\ref{clasificacion} we carry out a dynamical study of the operator (\ref{operador}) in order to obtain generic conclusions of this type of methods. In Proposition~\ref{0y1} we show that the points $z=0$, $z=\infty $ and $z=1$ are always fixed points of this operator and $z=-1$ is also a fixed point if $n+k$ is odd. The points $z=0$ and $z=\infty$ correspond to the zeros of the quadratic polynomial, which are superattracting fixed points of local degree $n$ (the numerical methods have order of convergence $n$ to the roots). In order to establish the existence of stable behaviour other than the basins of attraction of the roots, we also study the sets of parameters where $z=1$ and $z=-1$ are attracting fixed points (Propositions \ref{estabilidad_1} and \ref{estabilidad_-1}). This study is done under the extra hypothesis that the relation between the coefficients of the rational function and the parameter of the family is linear, which is a common phenomenon in the literature.

Section \S~\ref{casos} is devoted to display some known  examples of Newton-like algorithms that correspond to families with one free critical point, which allows us to draw the parameter planes of the families. The operators of this section follow Equation (\ref{operador}). It is also observed that, under given conditions, two different methods whose operators,  expressed as (\ref{operador}), have the same values of $n$ and $k$ present similar dynamics ( \S~\ref{n>k+1}).
Moreover, in \S~\ref{n=k}, we study the particular case in which $n=k$ and all the coefficients of the rational function, except $a_k$, are real. 

Finally, we want to emphasize that the functions introduced in Theorem~\ref{prop:symn} can be used to generate new Newton-type algorithms for solving nonlinear equations.
In fact, in Section \S~\ref{simetria} we show that the algorithms of the most usual methods, such as  Newton, Traub, Halley, Chebyshev, and Jarratt, are obtained  from the functions introduced in Theorem~\ref{prop:symn}. Although we do not intend to carry out an exhaustive verification, it is easy to extend this assertion to other Newton-like algorithms. 

In the construction of new Newton-type algorithms, parameters must be introduced in the iterative procedure applied to functions given in Theorem~\ref{prop:symn}. These parameters must be adjusted so that the method obtained has the desired order of convergence.

We also want to emphasize that the main goal of this paper is to study general properties of Newton-like algorithms and to make a qualitative study of them. The construction of methods using the iterative procedure provided by Theorem~\ref{prop:symn} is the aim of a coming paper.

For the sake of completeness, in \S~\ref{construccion} we present some of the best known Newton-like methods of different degree and in \S~\ref{sec:introhol} we recall some basic concepts of complex dynamics.

 \subsection{Newton-like methods}\label{construccion}

As mentioned before, multi-point iteration methods were introduced to improve the order of convergence and the efficiency of iterative methods and they are described by means of the expressions $w_1(x_k),w_2(x_k),...,w_n(x_k)$.  The iteration function $\phi$, defined as
\[
x_{k+1}=\phi \left(x_k,w_1(x_k),w_2(x_k),...,w_n(x_k)\right)
\]
is called a multi-point iteration function without memory.

The simplest examples are \emph{Steffensen's method}, with $w_1(x_k)=x_k+f(x_k)$:
\[
x_{k+1}=x_k-\frac{ f(x_k)^2}{f(x_k+f(x_k))-f(x_k)}
\]
and \emph{Traub-Steffensen's method}, with $w_1(x_k)=x_k+\gamma f(x_k)$:
\[
x_{k+1}=S(x_k):=x_k-\frac{\gamma f(x_k)^2}{f(x_k+\gamma f(x_k))-f(x_k)}.
\]
There are many  multi-point methods and we do not pretend here to do an exhaustive study of them; we  only point out those that we consider best known. A more comprehensive study can be seen in  \cite{ostrowski2016solution}, \cite{PETKOVIC2013} and \cite{traub1982},  for example.

Following the summary provided in \cite{petkovic}, different two-step methods can be built by using  Newton's method as pre-conditioner, such as \emph{Traub's scheme} \cite{traub1982}:
\begin{eqnarray*}
  y_k &=& x_k-\frac{f(x_k)}{f'(x_k)} \\
  x_{k+1} &=&  y_k - \frac{f(y_k)}{f'(x_k)}
\end{eqnarray*}
or
\emph{Ostrowski's scheme} \cite{ostrowski2016solution}:
\begin{eqnarray*}
y_k &=& x_k- \frac{f(x_k)}{f'(x_k)} \\
  x_{k+1} &=&  y_k-\frac{f(y_k)}{f'(x_k)} \frac{f(x_k)}{f(x_k)-2f(y_k)}.
\end{eqnarray*}
The latest was generalized by King \cite{king}, who defined the scheme:
\begin{eqnarray} \label{fking}
	\notag
y_k &=& x_k- \frac{f(x_k)}{f'(x_k)} \\
  x_{k+1} &=&  y_k-\frac{f(y_k)}{f'(x_k)} \frac{f(x_k)+\beta f(y_k)}{f(x_k)+(\beta-2)f(y_k)},
\end{eqnarray}
obtaining a family of iterative methods depending on one parameter.  It was shown by Chun et al. \cite{CHUN2012} that the best parameter is $\beta=0.$

If instead of using the Newton's method as the first step, the so-called Jarratt method step is used, a method of order of convergence four is obtained. The resulting scheme is called Jarratt method  \cite{jarratt}:
\begin{eqnarray}\label{ecJarrat}
\notag y_n &=& x_n- \frac{2}{3} \frac{f(x_n)}{f'(x_n)} \\
x_{n+1} &=&  x_n- J_f(x_n) \frac{f(x_n)}{f'(x_n)}
\end{eqnarray}
where:
\[
 J_f(x_n) =  \frac{3f'(y_n)+f'(x_n)}{2\left(3f'(y_n)-f'(x_n)\right)}.
 \]

From the two-step methods, other three-step methods can be obtained, with general scheme:
\begin{eqnarray*}
  y_k &=& x_k-\frac{f(x_k)}{f'(x_k)} \\
  z_k &=& \phi (x_k,y_k) \\
  x_{k+1} &=&  z_k-\frac{f(z_k)}{f'(z_k)}.
\end{eqnarray*}
Some examples of schemes obtained using this procedure are the sixth-order convergence method deduced by Wang et al.\ \cite{wang},
\begin{eqnarray*}
  x_{n+1} &=&x_n-J_f(x_n) \frac{f(x_n)}{f'(x_n)}\\ 
  y_n &=& x_n- \frac{2}{3} \frac{f(x_n)}{f'(x_n)} \\
J_f(x_n) &=&  \frac{3f'(y_n)+f'(x_n)}{6f'(y_n)-2f'(x_n)},
\end{eqnarray*}
the fourth-order family of iterative method introduced by Amat et al.\ in \cite{amat2005},

\begin{eqnarray} \label{famat}
	\notag
u_f(z)&=&  \frac{f(x_n)}{f'(x_n)} \\
\notag
 h_f(z) &=&  \frac{f'(z-\frac{2}{3}u_f(z))-f'(z)}{f'(z)}\\
J(z) &=&  z - u_f(z)+\frac{3}{4}u_f(z)h_f(z) \frac{1+\beta h_f(z)}{1+(\frac{3}{2}+\beta)h_f(z)},
\end{eqnarray}
and the family of sixth-order methods defined by Chun \cite{chun},
\begin{eqnarray*}
 y_n &=& x_n- \frac{2}{3} \frac{f(x_n)}{f'(x_n)} \\
  z_n &=&x_n-J_f(x_n) \frac{f(x_n)}{f'(x_n)}\\
x_{n+1} &=&  z_n - \frac{f(z_n)}{\alpha (z_n-x_n)(z_n-y_n)+\frac{3}{2}J_f(x_n) f'(y_n)+\left(1-\frac{3}{2}J_f(x_n)\right)f'(x_n)}.
\end{eqnarray*}

One of the best known Newton-like method corresponds to  the one-parameter family called Chebyshev-Halley.
\begin{eqnarray} \label{fcheby}
	\notag
	y_k &=& x_k-\frac{f(x_k)}{f'(x_k)} \\
	\notag
	L_f(x_k) &=& \frac{f(x_k) f''(x_k)}{(f'(x_k))^2}\\
	x_{k+1} &=&  y_k-\frac{1}{2}\frac{L_f(x_k)}{1-\alpha L_f(x_k)}\frac{f(x_k)}{f'(x_k)}.
\end{eqnarray}

This family includes the Chebyshev's method when the parameter $\alpha$  is equal to $0$, Halley's scheme for $\alpha = 1/2$,  and Newton’s method when $\alpha$ tends to $ \infty$. In \cite{CTV-gato} the authors began the dynamical study of this family applied on arbitrary polynomials of degree two. 

This type of dynamical study has been extended to other families of numerical methods, such as the King family \cite{CGTVV-king}, the $c$-family \cite{CCTV-familiac}, etc.
Nowadays, there is a wide literature expanding this study to methods with higher order of convergence (see for example \cite{AM-pg5}, \cite{CHUN2012},     \cite{junjua}, and references therein). In these papers,  the dynamical behaviour of these families applied on quadratic polynomials is considered. Dynamical studies of methods, or families of numerical methods, applied on polynomials of higher degree can be found (see  \cite{CCV-gradon+k}, \cite{CCV-gradon}, \cite{Gutierrezcubicos}, for example).

\subsection{Basic concepts of complex dynamics}\label{sec:introhol}

Before starting the dynamical study of these operators, we briefly recall the basic concepts of complex (as opposed to real) dynamics that we use in this paper. For a more detailed introduction to the topic of complex dynamics we refer to \cite{Milnor}.

Given a
rational map $R:\wcom\rightarrow \wcom$, where $\wcom$
denotes the Riemann sphere,  we  consider the dynamical system given by the iterates of $R$. We say that a point $z_0\in\wcom$ is \textit{fixed} if $R(z_0)=z_0$  and \textit{periodic} of period $p$ if $R^p(z_0)=z_0$, where $p$ is minimal. The multiplier of a fixed point is given by $\lambda(z_0)=R'(z_0)$. Analogously, the multiplier of a periodic point is given by $\lambda(z_0)=(R^p)'(z_0)=R'(z_0)\cdot R'(R(z))\cdot \ldots \cdot R'(R^{p-1}(z))$. A fixed or periodic point $z_0$ is called \textit{attracting} if $|\lambda(z_0)|<1$ (\textit{superattracting} if $|\lambda(z_0)|=0$), \textit{repelling} if $|\lambda(z_0)|>1$, and  \textit{indifferent} if $|\lambda(z_0)|=1$. An indifferent point is called parabolic if $\lambda(z_0)=e^{2\pi i p/q}$, where $p,q\in\N$. All attracting and parabolic fixed points have a basin of attraction associated to them which consists of the set of points which converge to $z_0$ under iteration of $R$. Analogously, every  attracting or parabolic periodic point $z_0$ has a basin of attraction associated to it which consists of the set of points which converge to the cycle $\langle z_0\rangle =\{z_0, R(z_0),\cdots, R^{p-1}(z_0)\}$.

The dynamics of $R$ provides a totally invariant partition of the Riemann sphere. The \textit{Fatou set} $\mathcal{F}(R)$ consists of the set of points $z\in\wcom$ for which the family of iterates of $R(z)$, $\{R(z), R^2(z), \ldots , R^n(z), \ldots \}$,  is normal (or equivalently equicontinuous) in some open neighbourhood of $z$. The Fatou set is open and consists of the set of points for which the dynamics presents stable behaviour. Its complement, the \textit{Julia set} $\mathcal{J}(f)$, is closed and consists of the set of points which present chaotic behaviour. The connected components of the Fatou set, called \textit{Fatou components}, are mapped amongst themselves under iteration of $R$. In view of Sullivan's No-Wandering Theorem (see \cite{Su}), all Fatou components are either periodic or preperiodic.
Every periodic Fatou component either belongs to the basin of attraction of an attracting or parabolic point, or is a simply connected rotation domain (a \textit{Siegel disk}), or is a doubly connected rotation domain (a \textit{Herman ring}).
Moreover, periodic Fatou components of rational maps can be related to \textit{critical points}, i.e.\ points where $R'(z)=0$. Indeed, every cycle of attracting or parabolic Fatou components contains, at least, a critical point. On the other hand, Siegel disks and Herman rings have critical points whose orbits accumulate on their boundaries.

Along the paper we use these concepts to draw both dynamical and parameter planes. \textit{Dynamical planes} show the dynamics of points in a given range. They are drawn by creating a grid of points in the specified range. Afterwards, the point is iterated up to 150 times. If the point converges to a root (the distance to the root is smaller than $10^{-4}$), the iteration stops and we use a scaling from red (fast convergence), to yellow, to green, to blue and to grey (slow convergence) to draw the point. If after 150 iterates the point has not converged to a root, the point is plotted in black.

When a family depends on parameters, it also makes sense to draw parameter planes. \textit{Parameter planes} describe the possible dynamics of a map depending on the parameter.
Since all Fatou components of rational maps are related to critical points, in order to draw parameter planes it is enough to study the orbits of the critical points, the \textit{critical orbits}. In this paper we restrict our drawings to families which only have one \textit{free} critical orbit, i.e.\ an orbit of a critical point other than the superattracting fixed points of the method,  up to symmetry (compare \S~\ref{simetria}). Therefore, in order to draw a parameter plane we restrict to a grid of points in a given range of parameters and iterate a free critical point.
 If the orbit of the critical point converges to a root (the distance to the root is smaller than $10^{-4}$), we conclude that there can be no stable behaviour other than convergence to the roots. In that case the iteration stops and we use a scaling from red (fast convergence), to yellow, to green, to blue and to grey (slow convergence) to draw the point. If after 150 iterates the point has not converged to a root, the point is plotted in black. These points correspond to parameters for which there might be other stable behaviour than convergence to the roots.

\section{Symmetries of operators} \label{simetria}

Symmetries in the dynamical planes of \textit{Newton-like} root-finding algorithms applied on polynomials $z^d-c$ have been observed for different families (see e.g.\ \cite{CCV-gradon},  and \cite{CEGJ}). In this section we prove the need for this symmetry to appear and use it to obtain the operator that appears by applying \textit{Newton-like} root-finding algorithms on degree 2 polynomials.
We start studying the families of maps presenting these symmetry properties
and we show that these maps which can be used to write the expressions of \textit{Newton-like} root-finding algorithms. First, we introduce the concepts of $\lambda^d$-odd and $\lambda^d$-even.

\begin{definition}\label{def:odd}
	Let $d\geq 2$. We say that a map $f:\wcom\rightarrow \wcom$ is $\lambda^d$-\textit{odd} if $ f(\lambda z)= \lambda f( z)$ for all $\lambda\in \C$ such that $\lambda^d=1$ and all $z\in\C$. Respectively, we say that $f$ is $\lambda^d$-\textit{even} 	if $f(\lambda z)= f( z)$.
\end{definition}

Notice that $\lambda^2$-odd and $\lambda^2$-even maps are, precisely, odd and even maps (the only second roots of the unity are 1 and -1). The dynamics of a $\lambda^d$-odd map is symmetric with respect to rotation by $d$th roots of the unity. Indeed, $\lambda^d$-odd maps are conjugated with themselves by multiplication with a $d$th-root of the unity: $f(z)= \lambda ^{-1}f(\lambda z)$. We refer to this property as the symmetry of $\lambda^d$-odd maps. This symmetry is relevant since it allows us to decrease the degrees of freedom of $\lambda^d$-odd maps. It is easy to show that if $\kappa\neq 0,\infty$ is a critical point of a $\lambda^d$-odd map $f$, then the points $\lambda^d \kappa$, where $\lambda^d=1$, are also critical points of $f$. Moreover, their dynamics is tied by the symmetry, so it is enough to control the dynamics of a single critical orbit to know the asymptotic behaviour of $d$ different critical points. The next proposition shows different ways in which we can use the polynomials $p(z)=z^d-c$, $c\in\C\setminus \{0\}$, to obtain $\lambda^d$-odd maps.

\begin{theorem}\label{prop:symn}
	Let $p(z)=z^d-c$, where $d\geq 2$, $c\in\com\setminus\{0\}$. Let $g,h:\WC\rightarrow\WC$ be $\lambda^d$-odd maps and let $H:\WC\rightarrow\WC$ be a $\lambda^d$-even map. Then, the following maps are $\lambda^d$-odd:
	
	\begin{enumerate}[i)]
		\item (the identity) $f(z)=z$ ;
		\item (the linear combination) $f(z)=a\cdot g(z)+b\cdot h(z)$, where $a,b\in\C$;
		\item (the composition) $f(z)=g(h(z))$;
		\item $\displaystyle f(z)=\frac{p(g(z))}{p'(h(z))}$;
		\item $f(z)=\displaystyle h(z)\cdot H(z)$. In particular, $H(z)$ can be obtained as follows:
				\begin{enumerate}[v.1)]
	\item $H(z)=\prod_{i=1}^k \frac{\displaystyle p^{(n_i)}\left(g_i(z)\right)}{\displaystyle p^{(m_i)}\left(h_i(z)\right)}$ is $\lambda^d$-even, where $p^{(n)}$ denotes the $n$th derivative of $p$, $k\geq 1$, $0\leq n_i, m_i\leq d$, $\sum_{i=1}^k m_i=\sum_{i=1}^k n_i$, and $g_i,h_i:\WC\rightarrow\WC$ are $\lambda^d$-odd maps.	
	\item $H(z)=\frac{\displaystyle \sum_{i=1}^k a_i \cdot p^{(n)}\left(g_i(z)\right) }{\displaystyle \sum_{i=1}^\ell b_i\cdot p^{(n)}\left(h_i(z) \right)}$, where $n\geq 0$, $p^{(n)}$ denotes the $n$th derivative of $p$, $k,\ell\geq 1$,   $a_i,b_i\in\C\setminus\{0\}$, and $g_i,h_i:\WC\rightarrow\WC$ are $\lambda^d$-odd maps.
	\item $H(z)= a_1\cdot H_1(z)+a_2\cdot H_2(z)$, where $H_1, H_2:\WC\rightarrow\WC$ are $\lambda^d$-even and $a_1,a_2\in\C$.
	\item $H(z)= H_1(z)/H_2(z)$, where $H_1,H_2:\WC\rightarrow\WC$ are $\lambda^d$-even.
		\end{enumerate}
			\end{enumerate}
\end{theorem}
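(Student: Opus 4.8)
The plan is to reduce every item to a single elementary observation about how the derivatives of $p(z)=z^d-c$ transform when the argument is multiplied by a $d$th root of unity. Fix $\lambda$ with $\lambda^d=1$. First I would record the scaling lemma: for every $0\le n\le d$ and every $w$,
\[
p^{(n)}(\lambda w)=\lambda^{-n}\,p^{(n)}(w).
\]
For $n\ge 1$ this is immediate, since $p^{(n)}(z)=\frac{d!}{(d-n)!}\,z^{d-n}$ is a monomial, so $p^{(n)}(\lambda w)=\lambda^{d-n}p^{(n)}(w)=\lambda^{-n}p^{(n)}(w)$ using $\lambda^d=1$; for $n=0$ the constant $-c$ is left untouched and $(\lambda w)^d=w^d$, giving $p(\lambda w)=p(w)=\lambda^{-0}p(w)$. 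In particular $p$ itself is $\lambda^d$-even and $p'$ scales by the factor $\lambda^{-1}$. This identity, combined with the defining relations $g(\lambda z)=\lambda g(z)$ for $\lambda^d$-odd maps and $H(\lambda z)=H(z)$ for $\lambda^d$-even maps, is the entire engine of the proof.

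With the lemma in hand, each item is verified by substituting $\lambda z$ and tracking the powers of $\lambda$ that emerge. Items (i)--(iii) and (v.3)--(v.4) use only the definitions; for instance in (iii) one computes $f(\lambda z)=g(h(\lambda z))=g(\lambda\,h(z))=\lambda\,g(h(z))$, where the last step applies $\lambda^d$-oddness of $g$ at the point $h(z)$ with the same root of unity $\lambda$, and in (v.4) the common factor simply cancels. For (iv) I would substitute and apply the lemma with $n=0$ in the numerator and $n=1$ in the denominator:
\[
f(\lambda z)=\frac{p(\lambda\,g(z))}{p'(\lambda\,h(z))}=\frac{p(g(z))}{\lambda^{-1}p'(h(z))}=\lambda\,\frac{p(g(z))}{p'(h(z))}=\lambda f(z),
\]
so $f$ is $\lambda^d$-odd. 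Item (v), $f=h\cdot H$ with $h$ odd and $H$ even, is then immediate: $f(\lambda z)=h(\lambda z)H(\lambda z)=\lambda\,h(z)H(z)=\lambda f(z)$.

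It remains to check that the constructions (v.1) and (v.2) genuinely produce $\lambda^d$-even maps, and here the stated arithmetic side-conditions are exactly what is needed. In (v.1), applying the lemma factor by factor yields
\[
H(\lambda z)=\prod_{i=1}^{k}\frac{\lambda^{-n_i}\,p^{(n_i)}(g_i(z))}{\lambda^{-m_i}\,p^{(m_i)}(h_i(z))}=\lambda^{\,\sum_i m_i-\sum_i n_i}\,H(z),
\]
which equals $H(z)$ precisely because the hypothesis $\sum_i n_i=\sum_i m_i$ forces the exponent to vanish. In (v.2) the same derivative order $n$ occurs in every term, so the lemma extracts a common factor $\lambda^{-n}$ from the whole numerator and from the whole denominator, and these cancel, leaving $H(\lambda z)=H(z)$.

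I do not expect a genuine obstacle: once the scaling lemma is isolated, the remainder is bookkeeping in exponents of $\lambda$. The only points that warrant care are the $n=0$ case of the lemma, where one must invoke $\lambda^d=1$ to neutralize the constant term rather than a naive homogeneity argument, and the placement of the balancing conditions ($\sum_i n_i=\sum_i m_i$ in (v.1), and a single common order $n$ in (v.2)), which are precisely the hypotheses that make the $\lambda$-powers cancel and hence explain why they appear in the statement.
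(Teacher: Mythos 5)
Your proposal is correct and follows essentially the same route as the paper's proof: both verify each item by substituting $\lambda z$ and tracking powers of $\lambda$, with the identity $p^{(n)}(\lambda w)=\lambda^{-n}p^{(n)}(w)$ (using $\lambda^d=1$, and treating $n=0$ separately) as the key computation; the paper merely carries out case \textit{iv)} by direct expansion and states the scaling identities inside case \textit{v)}, whereas you isolate them as a single lemma applied uniformly. This is a tidy reorganization of the same argument, not a different method.
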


\begin{proof}
	The cases  \textit{i)}, \textit{ii)}, and \textit{iii)} are straightforward.  We prove now case \textit{iv)}. Let  $f(z)=p(g(z))/p'(h(z))$. Using that $\lambda^d=1$ and $\lambda^{d-1}=\lambda^{-1}$, we have
	
	$$ f(\lambda z)=\frac{p(g(\lambda  z))}{p'(h(\lambda z))}=\frac{p(\lambda g(  z))}{p'(\lambda h(z))}= \frac{(\lambda g(z))^d-c}{d (\lambda h(z))^{d-1}}= \frac{\lambda^d g(z)^d-c}{d \cdot \lambda^{d-1} h(z)^{d-1}}=\frac{ g(z)^d-c}{d \cdot \lambda^{-1} h(z)^{d-1}}=\lambda \frac{ g(z)^d-c}{d\cdot   h(z)^{d-1}}=\lambda f(z).$$
	
	For \textit{v)}, the fact that $f(z)=\displaystyle h(z)\cdot H(z)$ is $\lambda^d$-odd follows directly from the definition of  $\lambda^d$-odd and $\lambda^d$-even.
	 We have to see that the proposed functions $H(z)$  are $\lambda^d$-even. We start with  
	 \[
	 \prod_{i=1}^k \frac{p^{(n_i)}\left(g_i(z)\right)}{p^{(m_i)}\left(h_i(z)\right)}.
	 \]
	 Notice that if $n_i=0$ (or $m_i=0$) then 
	 \[p^{(0)}\left(\lambda g_i(z)\right)= \left(\lambda g_i(z)\right)^d-c= p^{(0)}(g_i(z)). 
	 \]	 
	 If $0<n_i\leq d$, then 
	 \[
	 p^{(n_i)}\left(\lambda g_i(z)\right)=\frac{d!}{(d-n_i)!}(\lambda g_i(z))^{d-n_i}= \lambda^{-n_i}p^{(n_i)}\left(g_i(z)\right).
	 \]
	  Analogously, if $0<m_i\leq d$, then 
	  \[p^{(m_i)}\left(\lambda g_i(z)\right)= \lambda^{-m_i}p^{(m_i)}\left(g_i(z)\right).
	  \]	 
	 
	 The same equalities work if we replace $g_i$ by $h_i$. 
	 Then, the fact that $H$ is $\lambda^d$-even follows then easily from  $\sum_{i=1}^k m_i=\sum_{i=1}^k n_i$.
	
	Similarly, the fact that the function $H$ in \textit{v.2)} is $\lambda^d$-even holds easily using that $p^{(n)}\left(\lambda g_i(z)\right)= \lambda^{-n}p^{(n)}\left(g_i(z)\right).$ 
	The cases \textit{v.3)} and \textit{v.4)} follow directly from the definition of $\lambda^d$-even.
\end{proof}

\begin{remark} 
An important property of the maps constructed in Theorem~\ref{prop:symn} is that they can be used to obtain new Newton-like algorithms.
\end{remark}

Another important feature of the maps constructed in the previous Theorem is that they generally have $z=\infty$ as a simple fixed point: $f$ is a rational map and the degree of its numerator equals the degree of its denominator plus 1.  In the next proposition we describe how this property is preserved under the different constructions in Theorem~\ref{prop:symn}. The proof is straightforward by computing the degree of the numerator and the denominator of the resulting maps.

\begin{proposition}\label{prop:infinityfixed} Let $p(z)=z^d-c$, where $d\geq 2$, $c\in\com\setminus\{0\}$. Let $g,h:\WC\rightarrow\WC$ be two maps that have $z=\infty$ as simple fixed point. Then, the maps $f(z)=g(h(z))$ and $\displaystyle f(z)=\frac{p(g(z))}{p'(h(z))}$ have $z=\infty$ as simple fixed point. Moreover:
	
	\begin{enumerate}[i)]
			
		\item 	The map $f(z)=a\cdot g(z)+b\cdot h(z)$, where $a,b\in\C$, has $z=\infty$ as simple fixed point unless the linear combinations cancel out the highest order term of the numerator.
		
		\item The map $f(z)=\displaystyle h(z)\cdot H(z)$, where $H:\WC\rightarrow\WC$, has $z=\infty$ as simple fixed point provided that $H(z)$ is a rational map whose numerator and the denominator have the same degree.
	\end{enumerate}
	\end{proposition}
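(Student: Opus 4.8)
The plan is to reduce the whole statement to a degree computation, using the characterization recalled in the paragraph preceding the proposition: a rational map has $z=\infty$ as a simple fixed point precisely when the degree of its numerator exceeds the degree of its denominator by exactly one. Writing $g=G_1/G_2$ and $h=H_1/H_2$ in lowest terms, the hypothesis that $g$ and $h$ fix $\infty$ simply becomes $\deg G_1=\deg G_2+1$ and $\deg H_1=\deg H_2+1$; I set $g_2:=\deg G_2$ and $h_2:=\deg H_2$. For each construction I would form the resulting rational map, express it with an explicit numerator and denominator, and compare their degrees. A convenient observation that streamlines all cases is that the quantity (numerator degree) $-$ (denominator degree) is unchanged by cancelling common factors, so it suffices to compute it for the naive numerator and denominator, without first reducing to lowest terms.

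For the composition $f=g\circ h=G_1(h)/G_2(h)$, I would substitute $h=H_1/H_2$ and clear denominators: $G_1(h)=\tilde N_1/H_2^{\,g_2+1}$ and $G_2(h)=\tilde N_2/H_2^{\,g_2}$, where $\tilde N_j$ collects the terms $H_1^iH_2^{\deg G_j-i}$. Since the term of largest degree in each $\tilde N_j$ is unique (its degree is strictly increasing in the power of $H_1$), one reads off $\deg\tilde N_1=(g_2+1)(h_2+1)$ and $\deg\tilde N_2=g_2(h_2+1)$; cancelling one copy of $H_2^{\,g_2}$ gives $f=\tilde N_1/(H_2\,\tilde N_2)$ with degree difference $(g_2+1)(h_2+1)-\big(g_2(h_2+1)+h_2\big)=1$. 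For the Newton-like quotient $f=p(g)/p'(h)=(g^d-c)/(d\,h^{d-1})$ I would use $p(w)=w^d-c$ and $p'(w)=dw^{d-1}$ to write $f=\big(G_1^d-cG_2^d\big)H_2^{\,d-1}\big/\big(d\,G_2^dH_1^{d-1}\big)$; here the top term $G_1^d$ of the numerator is not cancelled by $-cG_2^d$ (lower degree), and a direct count gives numerator degree $d(g_2+1)+(d-1)h_2$ and denominator degree $dg_2+(d-1)(h_2+1)$, again with difference $d-(d-1)=1$.

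For the two remaining items I would treat case \textit{ii)} first: with $H=\mathcal N/\mathcal D$ and $\deg\mathcal N=\deg\mathcal D=:m$, the product $f=hH=H_1\mathcal N/(H_2\mathcal D)$ has numerator degree $(h_2+1)+m$ and denominator degree $h_2+m$, differing by one; the leading coefficient of $H_1\mathcal N$ is the product of the nonzero leading coefficients of $H_1$ and $\mathcal N$, so no cancellation can spoil the count. Case \textit{i)} is the only one needing genuine care: for $f=aG_1/G_2+bH_1/H_2=(aG_1H_2+bH_1G_2)/(G_2H_2)$, the two summands in the numerator both have degree $(g_2+1)+h_2=(h_2+1)+g_2=g_2+h_2+1$, so the numerator has degree $g_2+h_2+1$ and the denominator degree $g_2+h_2$ precisely when the combined leading coefficient $a\,(\mathrm{lead}\,G_1)(\mathrm{lead}\,H_2)+b\,(\mathrm{lead}\,H_1)(\mathrm{lead}\,G_2)$ does not vanish; this is exactly the exceptional cancellation excluded in the statement, and when it occurs the numerator degree drops and $\infty$ need no longer be a simple fixed point.

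The main point to watch — and the only place where the conclusion can actually fail — is therefore the vanishing of the leading coefficient of a sum, which happens solely in case \textit{i)} and is recorded as the stated exception. In every other construction the top-degree term of the numerator is a product or a power of nonzero leading coefficients and hence cannot cancel, while common factors shared by numerator and denominator lower both degrees equally and so leave the key difference unaffected. Consequently the argument never meets a real obstacle beyond careful bookkeeping, which is why the proposition can be dispatched by the degree count sketched above.
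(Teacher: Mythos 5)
Your proposal is correct and follows exactly the route the paper indicates: the paper disposes of this proposition with the single remark that ``the proof is straightforward by computing the degree of the numerator and the denominator of the resulting maps,'' and your argument is precisely that degree count, carried out explicitly for each construction using the characterization that $z=\infty$ is a simple fixed point when the numerator degree exceeds the denominator degree by one. Your added observations --- that the degree difference is invariant under cancellation of common factors, and that the only possible failure is the vanishing of the leading coefficient of the sum in case \textit{i)}, which is exactly the exception the statement records --- are details the paper leaves implicit, not a different method.
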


\begin{remark}\label{rem:infty}
	 We can analyse whether the different options for $H$ in Theorem~\ref{prop:symn} \textit{v)} satisfy the conditions of Proposition~\ref{prop:infinityfixed} \textit{ii)}. The numerator and denominator of the map $H$ obtained from \textit{v.1)} have the same degree. The map obtained from \textit{v.2)} also satisfies the property provided that the lineal combinations do not decrease the degree at the numerator or the denominator. Analogously, the map $H$ in \textit{v.3)} satisfies the property provided that $H_1$ and $H_2$ also do and there are no simplifications of the higher order term at the numerator. Finally, the numerator and denominator of the map $H$ in \textit{v.4)} have the same degree provided that $H_1$ and $H_2$ also do.
\end{remark}

Since the base map used in Theorem~\ref{prop:symn}, $f(z)=z$ (the identity map), has $z=\infty$ as simple fixed point, we can conclude that the maps obtained fromTheorem~\ref{prop:symn} have $z=\infty$ as simple fixed point provided that the linear combinations given by \textit{ii)}, \textit{v.2)}, and \textit{v.3)} keep the maximal degree. Otherwise, the degree of $z=\infty$ as a fixed point may increase (it becomes a superattracting fixed point) or decrease ($z=\infty$ is no longer a fixed point).

As it has been stated,  the different steps in Theorem~\ref{prop:symn} are relevant for root-finding algorithms since they can be used to obtain many different methods. For instance:

\begin{itemize}
	\item Newton's method $N_p(z)=z-p(z)/p'(z)$ can be obtained from \textit{i)}, \textit{ii)}, and \textit{iv)};
	\item Traub's method $T_p(z)=N_p(z)-p\left(N_p(z)\right)/p'(z)$ can be obtained from \textit{i)}, \textit{ii)}, and \textit{iv)};
	\item Chebyshev's method $C_p(z)=z-\left(1+\frac{1}{2}L(z)\right)p(z)/p'(z)$, where $L(z)=p(z)p''(z)/p'(z)^2$, can be obtained from \textit{i)}, \textit{ii)}, \textit{iv)}, and \textit{v.1)}.
	\item Halley's method $H_p(z)=z-L(z)p(z)/p'(z)$, where $L(z)=\left(1-\frac{p(z)\cdot p''(z)}{2p'(z)^2}\right)^{-1}$, can be obtained from \textit{i)}, \textit{ii)}, \textit{iv)}, and \textit{v)} ($L(z)$ is obtained combining \textit{v.1)}, \textit{v.3)}, and \textit{v.4)}).
	\item Jarratt's method (see \eqref{ecJarrat}) can be obtained from \textit{i)}, \textit{ii)}, \textit{iii)}, \textit{iv)}, and \textit{v.2)}.
	
\end{itemize}

 Let $f$ be a map which is obtained by applying a root finding algorithm that can be built using Theorem~\ref{prop:symn} with $p(z)=z^2-c$. Then, $f$ is odd (by definition $\lambda^2$-odd maps are odd) and $f(\pm\sqrt{c})=\pm\sqrt{c}$ (the roots need to be attracting fixed points of $f$). Following Proposition~\ref{prop:infinityfixed}, we may also assume that $f(\infty)=\infty$. By taking a conjugation $\tau$ which sends $\sqrt{c}$ to $\infty$, $-\sqrt{c}$ to 0, and $\infty$ to $1$, we obtain a new map $\tilde{f}$ with the same dynamics (we just move the roots to $0$ and $\infty$). In the next proposition we describe some properties of $\tilde{f}$.

\begin{proposition}\label{prop:quadraticsym}
	Let $c\in\C\setminus\{0\}$ and let $f:\WC\rightarrow\WC$ be such that $f(\sqrt{c})=\sqrt{c}$, $f(-\sqrt{c})=-\sqrt{c}$, $f(\infty)=\infty$, and $f(-z)=-f(z)$ for all $z\in\C$. Let $\tau(z)=(z+\sqrt{c})/(z-\sqrt{c})$. Then the map $\tilde{f}(z)=\tau\circ f\circ \tau^{-1}(z)$ satisfies  $\tilde{f}(1)=1$ and   $\iota \circ \tilde{f}\circ \iota^{-1}(z)=\tilde{f}(z)$, where $\iota(z)=1/z$.
	
\end{proposition}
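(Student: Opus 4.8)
The plan is to verify the two assertions separately, since the fixed‑point condition $\tilde f(1)=1$ is immediate while the functional symmetry $\iota\circ\tilde f\circ\iota^{-1}=\tilde f$ carries essentially all of the content. For the first assertion I would simply track the point $1$ through the three maps composing $\tilde f=\tau\circ f\circ\tau^{-1}$. Since $\tau$ sends $\infty$ to $1$, we have $\tau^{-1}(1)=\infty$; the hypothesis $f(\infty)=\infty$ then gives $f(\tau^{-1}(1))=\infty$; and applying $\tau$ once more returns $\tau(\infty)=1$. Hence
\[
\tilde f(1)=\tau\bigl(f(\tau^{-1}(1))\bigr)=1 ,
\]
using only that $\tau$ normalizes $\infty$ to $1$ together with the invariance of $\infty$ under $f$.

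For the symmetry relation I would first record two elementary facts. The inversion $\iota(z)=1/z$ is an involution, so $\iota^{-1}=\iota$; and the oddness hypothesis $f(-z)=-f(z)$ says exactly that $f$ commutes with the negation map $M(z)=-z$, that is $f\circ M=M\circ f$, where $M$ is likewise an involution. The key computation, which I expect to be the crux of the whole argument, is the conjugacy identity
\[
\iota\circ\tau=\tau\circ M ,
\]
equivalently $1/\tau(z)=\tau(-z)$. I would verify this directly from $\tau(z)=(z+\sqrt c)/(z-\sqrt c)$, observing that both sides reduce to $(z-\sqrt c)/(z+\sqrt c)$. Rearranging the identity (applying $\tau^{-1}$ on the left and right) yields the companion relation $\tau^{-1}\circ\iota=M\circ\tau^{-1}$.

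With these relations in hand the conclusion follows by a purely formal chain of substitutions, using $\iota^{-1}=\iota$:
\[
\iota\circ\tilde f\circ\iota^{-1}
=\iota\circ\tau\circ f\circ\tau^{-1}\circ\iota
=\tau\circ M\circ f\circ M\circ\tau^{-1}
=\tau\circ f\circ\tau^{-1}
=\tilde f ,
\]
where the penultimate equality uses $M\circ f\circ M=f$, valid because $f\circ M=M\circ f$ and $M\circ M=\mathrm{id}$. Conceptually, the identity $\iota\circ\tau=\tau\circ M$ is the statement that the inversion $\iota$, pulled back through the normalizing Möbius map $\tau$, becomes precisely the $d=2$ rotational symmetry $z\mapsto -z$ of the original $\lambda^2$‑odd map $f$. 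I expect verifying this identity to be the only nontrivial point; once it is established, the oddness of $f$ and the involutivity of $M$ and $\iota$ make the remaining manipulation routine.
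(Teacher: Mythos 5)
Your proof is correct and takes essentially the same approach as the paper, whose argument rests on the same two conjugacy identities (with $\varsigma(z)=-z$ in place of your $M$): $\iota\circ\tau=\tau\circ\varsigma$ and $\tau^{-1}\circ\iota=\varsigma\circ\tau^{-1}$, followed by the identical formal chain using $f=\varsigma\circ f\circ\varsigma$. The only cosmetic differences are that you derive the second identity from the first by conjugation where the paper checks both directly, and you spell out $\tilde f(1)=1$, which the paper leaves implicit.
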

\begin{proof}
	Notice that $\tau^{-1}(z)=\sqrt{c}(z+1)/(z-1)$ and $\iota(z)=\iota^{-1}(z)$. Let $\varsigma(z)=-z$. We have
	
	$$\varsigma\circ\tau^{-1}\circ \iota (z)=-\sqrt{c}\frac{1/z+1}{1/z-1}=-\sqrt{c}\frac{1+z}{1-z}=\sqrt{c}\frac{1+z}{z-1}=\tau^{-1}(z);$$
	$$\iota\circ\tau \circ\varsigma(z)=\frac{1}{\displaystyle\frac{-z+\sqrt{c}}{-z-\sqrt{c}}}=\frac{-z-\sqrt{c}}{-z+\sqrt{c}}=\frac{z+\sqrt{c}}{z-\sqrt{c}}=\tau(z).$$
	Using that $f(z)=\varsigma\circ f\circ\varsigma(z)$ we conclude:
	$$\iota\circ \tilde{f}\circ \iota^{-1}(z)=\iota\circ \tau \circ f \circ \tau^{-1}\circ \iota(z)=\iota\circ \tau \circ \varsigma\circ f\circ\varsigma \circ \tau^{-1}\circ \iota(z)=\tau\circ f\circ \tau^{-1}(z)=\tilde{f}.$$
\end{proof}

In Proposition~\ref{prop:quadraticsym} we have proved that $\tilde{f}$ satisfies $\tilde{f}(1)=1$ and   $\iota\circ \tilde{f}\circ \iota(z)=\tilde{f}(z)$, where $\iota(z)=1/z$. In the next theorem we provide a classification of all rational maps satisfying these properties.

\begin{theorem}\label{teorema}
	
	Let $R:\WC\rightarrow \WC$ be a  degree $n$ rational map and let $\iota(z)=1/z$. Then, the following items are equivalent:
	\begin{enumerate}
		\item $R(z)$ satisfies (i) $R(1)=1$ and (ii)  $\iota\circ R\circ \iota^{-1}(z)=R(z)$.
		\item $\displaystyle R(z)=\prod_{i=1}^n\frac{z-r_{i}}{1-r_{i}z}$, where $r_i\in\C$, $i=1,...,n$.
		\item $\displaystyle R(z)=\frac{a_n +a_{n-1} z+...+a_1 z^{n-1}+a_0 z^n}{a_0+a_{1}z+...+a_{n-1} z^{n-1}+a_n z^n}=\frac{P(z)}{\widehat{P}(z)}$, where $a_i\in\C$, $i=0,...,n$.
	\end{enumerate}
\end{theorem}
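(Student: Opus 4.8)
The plan is to prove the three items equivalent via the cycle $(2)\Rightarrow(3)\Rightarrow(1)\Rightarrow(2)$, since $(2)\Rightarrow(3)$ and $(3)\Rightarrow(1)$ are direct computations while $(1)\Rightarrow(2)$ carries the real content. First I would establish $(2)\Rightarrow(3)$ by expanding the product $\prod_{i=1}^n (z-r_i)/(1-r_i z)$ and observing that the denominator is obtained from the numerator by reversing the order of its coefficients. Concretely, if $P(z)=\prod_{i=1}^n(z-r_i)=\sum_{j=0}^n a_j z^j$ (so that, up to an overall constant, $a_n=1$ and $a_0=(-1)^n\prod r_i$), then $\prod_{i=1}^n(1-r_i z)=z^n P(1/z)=\widehat{P}(z)=\sum_{j=0}^n a_{n-j}z^j$, which is exactly the palindromic denominator in item $(3)$. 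The implication $(3)\Rightarrow(1)$ is then immediate: setting $z=1$ makes numerator and denominator equal, giving $R(1)=1$; and the functional equation $\iota\circ R\circ\iota^{-1}=R$ follows because replacing $z$ by $1/z$ in $P(z)/\widehat P(z)$ and simplifying (multiplying top and bottom by $z^n$) swaps $P$ and $\widehat P$, and then applying $\iota$ on the outside swaps them back.

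The heart of the argument is $(1)\Rightarrow(2)$. Here I would start from an arbitrary degree $n$ rational map $R=A/B$ with $\gcd(A,B)=1$ and translate the symmetry condition $\iota\circ R\circ\iota^{-1}(z)=R(z)$ into an identity between polynomials. Writing out $\iota\circ R\circ\iota^{-1}(z)=1/R(1/z)=B(1/z)/A(1/z)$ and clearing the $1/z$'s by the reversal operation $Q\mapsto z^{\deg Q}Q(1/z)$, the condition $R(1/z)=1/R(z)$ becomes $A(z)\widehat A(z)=B(z)\widehat B(z)$ (where $\widehat{\cdot}$ denotes the coefficient-reversal as above), up to accounting for degrees. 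The key step is to argue that this symmetry forces $B=\widehat A$ as polynomials (up to a scalar), i.e.\ that the denominator must be the reversal of the numerator. I would do this by comparing roots: the condition says that $\rho$ is a pole of $R$ iff $1/\rho$ is a zero of $R$, so the zeros of $R$ come in the configuration $\{r_i\}$ and the poles are $\{1/r_i\}$, which is precisely the factored form in item $(2)$ after normalizing. Finally the constant $R(1)=1$ pins down the overall scalar so that $R(z)=\prod_{i=1}^n (z-r_i)/(1-r_i z)$ with matching leading coefficient.

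The main obstacle I anticipate is handling the \emph{degenerate} cases carefully in $(1)\Rightarrow(2)$, where the naive root-pairing argument can break down: namely when $z=0$ or $z=\infty$ is a zero or pole (so some $r_i=0$ or the corresponding factor degenerates), when zeros and poles coincide after the $z\mapsto 1/z$ involution forcing cancellation in $A/B$, and when the reversal operation drops the degree because a leading coefficient vanishes. I would treat these by working projectively on $\WC$ and tracking multiplicities of $0$, $\infty$, and self-paired points ($r_i=\pm1$) under the involution $z\mapsto 1/z$, verifying that in every case the factor $(z-r_i)/(1-r_iz)$ still accounts correctly for one zero/pole pair. A clean way to organize this is to note that each factor $(z-r_i)/(1-r_i z)$ is itself a degree one map satisfying conditions (i) and (ii), so item $(2)$ exhibits $R$ as a product of $n$ such elementary symmetric factors; conversely the symmetry group generated by $\iota$ acts on the divisor of $R$, and $(1)$ says this divisor is invariant, which decomposes it into the asserted pairs. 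Once the pairing is secured, matching the normalization via $R(1)=1$ and reading off the palindromic coefficient pattern to close the loop back to $(3)$ is routine.
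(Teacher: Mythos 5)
Your route coincides with the paper's in its essential content: the paper proves $1\Rightarrow2\Rightarrow3\Rightarrow1$ and you run the cycle in the other direction, but the substantive implication $(1)\Rightarrow(2)$ is argued the same way in both --- the symmetry $\iota\circ R\circ\iota^{-1}=R$ forces the poles of $R$ to be the reciprocals of its zeros (the paper writes $R=C\prod(z-r_i)/\prod(z-s_j)$, computes $\iota\circ R\circ\iota$, and matches root sets to get $1/s_i=r_i$), after which $R(1)=1$ pins down the multiplicative constant. The one place you genuinely improve on the paper is $(2)\Rightarrow(3)$: the paper proves the palindromic coefficient pattern by induction on $n$, multiplying in one factor $(z-r_n)/(1-r_nz)$ at a time, whereas your single reversal identity $\prod_{i=1}^n(1-r_iz)=z^nP(1/z)=\widehat{P}(z)$ gives it in one line. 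Your observation that each factor $(z-r_i)/(1-r_iz)$ itself satisfies (i)--(ii), so item 2 exhibits $R$ as a product of $n$ elementary symmetric factors, is a nice structural remark absent from the paper. Note also that evaluating at $z=1$ in the normalization step is legitimate: by (i), $z=1$ is neither a zero nor a pole, and if $-1$ were a zero of a reduced $R$ then by the pairing it would simultaneously be a pole, contradicting coprimality (compare the paper's Remark 3.1).

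Your anticipated obstacle about degenerate configurations is real, but it cannot be dissolved by the multiplicity bookkeeping you propose, because $(1)\Rightarrow(2)$ is literally false in one degenerate case. Take $R(z)=1/z$: it has degree $1$, satisfies $R(1)=1$ and $\iota\circ R\circ\iota^{-1}(z)=1/R(1/z)=1/z=R(z)$, and has the form of item 3 with $n=1$, $a_0=0$, $a_1=1$; yet its zero is at $\infty$ and its pole at $0$, so it is not $(z-r)/(1-rz)$ for any $r\in\C$ --- the form in item 2 always has all $n$ zeros finite, and $1/z$ is only the limit $r\to\infty$. The same failure occurs in any degree whenever the involution pairs a zero at $\infty$ with a pole at $0$, equivalently whenever $a_0=0$ in item 3. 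The paper's proof shares this blind spot: the step concluding $n_1=n_2=n$ tacitly assumes every zero and pole of $R$ is finite. So your third paragraph should be converted from a case analysis to be completed into a hypothesis to be added (for instance, that $\infty$ is not a zero of $R$, equivalently $a_0\neq0$), which is harmless in context since the Newton-like operators under study have $0$ and $\infty$ as superattracting fixed points with $z^n$ leading behaviour. With that proviso, your divisor-pairing argument --- or equivalently the coprimality manipulation of $A\widehat{A}=B\widehat{B}$, which forces $B=\widehat{A}$ up to a scalar once the reversal cannot drop degree --- closes the proof correctly, and is if anything tidier than the paper's.
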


\begin{proof}
	$1 \Rightarrow 2)$ As $R(z)$ is a degree $n$ rational map, it can be written as:
	\begin{equation}
		R(z)=C\frac{(z-r_{1})\cdot ...\cdot (z-r_{n_{1}})}{(z-s_{1})\cdot ...\cdot
			(z-s_{n_{2}})},  \label{R1}
	\end{equation}
	where at least  $n_{1}=n$ or $n_{2}=n$ and $C\in \mathbb{C}$. Since $\iota^{-1}(z)=\iota(z)$,
	we have that:
	\[
	\iota\circ R \circ \iota^{-1}(z)=\iota\left(R\left(1/z\right)\right)=\iota\left(C \frac{(\frac{1}{z}-r_{1})\cdot
		...\cdot (1/z-r_{n_{1}})}{(1/z-s_{1})\cdot ...\cdot (1/z-s_{n_{2}})}\right)=\frac{1}{C}\frac{(1/z-s_{1})\cdot ...\cdot (1/z-s_{n_{2}})}{(1/z-r_{1})\cdot ...\cdot (1/z-r_{n_{1}})}=R(z).
	\]
	From this expression and comparing with (\ref{R1}), we have that:
	$$
	\left. \begin{array}{l}
		z=1/s_{i},\text{ }i=1,...,n_{2},\text{ are roots of }R \\
		z=r_{i},\text{ }i=1,...,n_{1},\text{ are roots of }R
	\end{array}
	\right\}
	\Longrightarrow n_{2}=n_{1}=n \text{ and } \frac{1}{s_{i}}=r_{j} \text{ for some } j.
	$$
	Let us write  $\frac{1}{s_{i}}=r_{i},$ $i=1,...,n.$ Then, we have that:
	\begin{equation}
		R(z)=\frac{1}{C}\frac{(1/z-s_{1})\cdot ...\cdot (1/z-s_{n})}{
			(1/z-r_{1})\cdot ...\cdot (1/z-r_{n})}=\frac{1}{C}\frac{
			(1-zs_{1})\cdot ...\cdot (1-z s_{n})}{(1-z r_{1})\cdot ...\cdot (1-z r_{n})}
		\label{R2}
	\end{equation}
	
	Denominators in (\ref{R1}) and (\ref{R2}) are two degree $n$ polynomials $
	p(z)$ and $q(z)$ with the same roots (including multiplicity), then $
	p(z)=Kq(z)$  with $K\in \mathbb{C}\ $ and
	\[
	R(z)=C\frac{(z-r_{1})\cdot ...\cdot (z-r_{n})}{K(1-zr_{1})\cdot ...\cdot
		(1-zr_{n})}.
	\]
	Since $R(1)=1,$ we have that $\frac{C}{K}=1$. We  obtain
	
	\[
	R(z)=\frac{(z-r_{1})\cdot ...\cdot (z-r_{n})}{(1-zr_{1})\cdot ...\cdot
		(1-zr_{n})}.
	\]

	$2\Rightarrow 3)$ For this implication we use the induction method.
	This is obviously true for $n=1$:
	\[
	\frac{(z-a)}{(1-a z)}.
	\]
	
	Let us suppose that it is true for $n-1$:
	
	\[
	\frac{a_{n-1} +a_{n-2} z+...+a_1 z^{n-2}+a_0 z^{n-1}}{a_0+a_{1}z+...+a_{n-1} z^{n-1}}=\prod_{i=1}^{n-1}
	\frac{(z-r_i)}{(1-r_i z)}
	\]
	and we check that it is true for $n$:
\begin{eqnarray*}
	& &\frac{(z-r_1)(z-r_2)...(z-r_{n-1})(z-r_{n})}{(1-r_1 z)(1-r_2 z)...(1-r_{n-1} z)(1-r_{n} z)}=  \\
	&=&\frac{a_{n-1} +a_{n-2} z+...+a_1 z^{n-2}+a_0 z^{n-1}}{a_0+a_{1}z+...+a_{n-2} z^{n-2}+a_{n-1} z^{n-1}}\frac{z-r_n}{1-r_n z} = \\
		&=& \frac{-r_n a_{n-1}+(a_{n-1}-r_n a_{n-2})z+...+(a_1-r_n a_0)z^{n-1}+a_0 z^n}{a_0+(a_1 -r_n a_0)z+...+(a_{n-1}-r_n a_{n-2})z^{n-1}-a_{r-1} r_n z^n}=\\
		&=& \frac{b_n +b_{n-1} z+...+b_1 z^{n-1}+b_0 z^n}{b_0+b_{1}z+...+b_{n-1} z^{n-1}+b_n z^n}.
	\end{eqnarray*}
	
	$3\Rightarrow 1)$ Any rational function that satisfies $3)$ also satisfies $1)$. Indeed, it verifies $R(1)=1$ and
	\begin{eqnarray*}
		\iota\circ R \circ \iota^{-1}(z)&=& \iota\left(R\left(\frac{1}{z}\right)\right)=\iota\left(\frac{a_n+a_{n-1}\frac{1}{z}+...a_1 \frac{1}{z^{n-1}}+a_0 \frac{1}{z^n}}
		{a_0+a_1 \frac{1}{z}+...+a_{n-1} \frac{1}{z^{n-1}}+a_n \frac{1}{z^n}}\right)=\\
		&=& \iota\left(\frac{a_n z^n+a_{n-1} z^{n-1}+...+a_1 z+a_0}{a_0 z^n+a_1 z^{n-1}+...+a_{n-1}z+a_n} \right)=\\
		&=& \frac {a_0 z^n+a_1 z^{n-1}+...+a_{n-1}z+a_n}{a_n z^n+a_{n-1} z^{n-1}+...+a_1 z+a_0}=R(z).
	\end{eqnarray*}

\end{proof}

\section{Dynamical study of Newton-like operators} \label{clasificacion}

In \S~\ref{simetria} we have justified that Newton-like  methods applied on complex quadratic polynomials $p(z)=z^2-c$ lead to an  operator $O(z)$ of the form (\ref{operador}), where $n$ denotes the order of convergence to the roots of $p(z)$. Examples of this fact can be found in
   \cite{CCV-gradon+k} \cite{CCTV-familiac},\cite{CV-OstrowskiChun}, \cite{CGMT-pg4},  \cite{CTV-gato}, \cite{ZCT-pg8}, among many other papers.  In \cite{CCV-gradon+k} we study this family of operators when the polynomials of the rational function has $k$ equal roots. In this paper we delve deeper into the  dynamical study of the operator $O(z)$. Following the results from \S~\ref{simetria}, we can write the operator $O(z)$  as:

\begin{equation}\label{op}
    O(z)=z^n \frac{a_k+a_{k-1} z+...+a_{1} z^{k-1}+ z^k}{1+a_{1} z+...+a_{k-1} z^{k-1}+a_k z^k}=z^n \prod_{i=1}^{k}
\frac{(z-r_i)}{(1-r_i z)},
\end{equation}
with $a_k \neq 0$.
Its derivative can be written as:
 \begin{equation}\label{dop}
    O^{\prime}(z)=z^{n-1} \prod_{i=1}^{k}\frac{(z-r_i)}{(1-r_i z)} \left(n+z \sum_{j=1}^{k} \frac{1-r_j ^2}{(z-r_j)(1-r_j z)}   \right).
\end{equation}

The relations among the coefficients  $a_i$  and the roots $r_i$ of the polynomial $P(z)=a_k+a_{k-1} z+...+a_{1} z^{k-1}+ z^k=\prod_{i=1}^{k}(z-r_i)$ are given by:
\begin{eqnarray} \label{relacion_ayr}
	\notag
  a_k &=& (-1)^{k}\prod_{i=1}^{k}(r_i) \\
  a_{k-1}  &=& (-1)^{k-1}(r_1r_2...r_{k-1}+r_1r_3...r_k+...) \\
  \notag
  ...\\
  \notag
  a_2 &=& r_1r_2+r_1r_3+.... \\
  \notag
  a_1 &=& -\sum_{i=1}^{k}r_i.
\end{eqnarray}

Observe that the  subscript of the coefficients indicates the number of roots that appear in the different products.

\begin{remark} \label{rem:3.1} Along this paper we suppose that $z=1$ is not a root of the polynomial $P(z)$ since, if $z = 1$ were a root of  $P(z)$, we could simplify the rational function of the operator. Let us suppose that $r_k=1$. Then,
 \begin{eqnarray}
\notag
   O(z)&=&z^n \frac{a_k+a_{k-1} z+...+a_{1} z^{k-1}+ z^k}{1+a_{1} z+...+a_{k-1} z^{k-1}+a_k z^k}=z^n \prod_{i=1}^{k}
\frac{(z-r_i)}{(1-r_i z)}=\\
\notag
&=& - z^n \prod_{i=1}^{k-1}\frac{(z-r_i)}{(1-r_i z)}= -z^n \frac{b_{k-1}+...+b_{1} z^{k-2}+ z^{k-1}}{1+b_{1} z+...+b_{k-1} z^{k-1}}.
\end{eqnarray}
In this case, the point $z=1$ is not a fixed point of the operator, it forms an orbit of period 2 with $z=-1$ if $k+n$ even or it is a pre-image of $z=-1$ if $k+n$ odd. Therefore, in this paper, we assume that $1+a_1+...+a_k \neq 0$.
\end{remark}

In the following lemma, we provide another relation between the roots and the coefficients of the polynomial.

\begin{lemma}\label{lemma_relacion_ayr}
Given a polynomial $P(z)=a_k+a_{k-1} z+...+a_{1} z^{k-1}+ z^k$  with roots $r_i$, $i=1,...,k$, the following relationship holds:
\[
 \sum_{j=1}^{k} \frac{1+r_j }{1-r_j} = k - 2 \frac{a_1+2a_2+...+k a_k}{1+a_1+a_2+...+a_k}.
\]
\end{lemma}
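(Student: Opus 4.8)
The plan is to reduce the left-hand side to a logarithmic derivative of $P$ evaluated at $z=1$, and then to identify the two symmetric expressions on the right-hand side as the value and the derivative at $z=1$ of the reciprocal polynomial $\widehat P$.

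First I would rewrite each summand using the elementary identity $\frac{1+r_j}{1-r_j}=-1+\frac{2}{1-r_j}$, which is legitimate because the standing assumption of Remark~\ref{rem:3.1} guarantees that $z=1$ is not a root, so no denominator vanishes (equivalently $P(1)=1+a_1+\dots+a_k\neq 0$). Summing over $j=1,\dots,k$ gives
\[
\sum_{j=1}^k \frac{1+r_j}{1-r_j} = -k + 2\sum_{j=1}^k \frac{1}{1-r_j}.
\]
Since $P(z)=\prod_{i=1}^k(z-r_i)$, its logarithmic derivative is $P'(z)/P(z)=\sum_{j=1}^k 1/(z-r_j)$, so evaluating at $z=1$ yields $\sum_{j=1}^k 1/(1-r_j)=P'(1)/P(1)$. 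Hence the left-hand side equals $-k+2\,P'(1)/P(1)$.

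Next I would identify the quantities on the right-hand side. Evaluating $P$ at $z=1$ immediately gives $P(1)=1+a_1+\dots+a_k$, the claimed denominator. To recognise the numerator I would introduce the reciprocal polynomial $\widehat P(z)=z^k P(1/z)=1+a_1 z+\dots+a_k z^k$ (the denominator polynomial $\widehat P$ of Theorem~\ref{teorema}), whose derivative is $\widehat P'(z)=a_1+2a_2 z+\dots+k a_k z^{k-1}$, so that $\widehat P'(1)=a_1+2a_2+\dots+k a_k$ is exactly the numerator, while $\widehat P(1)=P(1)$. Differentiating the identity $\widehat P(z)=z^k P(1/z)$ and setting $z=1$ produces the link $\widehat P'(1)=kP(1)-P'(1)$, that is $P'(1)=kP(1)-\widehat P'(1)$. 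Substituting this into $-k+2\,P'(1)/P(1)$ and using $P(1)=\widehat P(1)$ gives
\[
-k + 2\Bigl(k-\frac{\widehat P'(1)}{\widehat P(1)}\Bigr) = k - 2\frac{a_1+2a_2+\dots+k a_k}{1+a_1+\dots+a_k},
\]
which is the assertion.

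The only genuine bookkeeping is the relation between $P'(1)$ and the numerator $a_1+2a_2+\dots+ka_k$; the reciprocal-polynomial identity disposes of it in a single differentiation, but one could equally expand $P'(1)$ directly in the $a_j$ and verify the equivalent identity $P'(1)+\sum_{j=1}^k j a_j = kP(1)$ coefficient by coefficient. I expect no real obstacle here: the whole argument rests on the logarithmic-derivative formula together with one differentiation of $\widehat P$.
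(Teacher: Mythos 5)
Your proof is correct, but it proceeds by a genuinely different route than the paper. The paper proves Lemma~\ref{lemma_relacion_ayr} by induction on the degree $k$: it verifies the case $k=1$ directly, then writes $P_k(z)=(z-r_k)P_{k-1}(z)$ and pushes the extra factor through the claimed formula, using the Vieta-type relations~(\ref{relacion_ayr}) to re-express the resulting coefficient combinations $b_j=a_j-r_k a_{j-1}$. Your argument instead closes the computation in one stroke: the decomposition $\frac{1+r_j}{1-r_j}=-1+\frac{2}{1-r_j}$ turns the left-hand side into $-k+2\,P'(1)/P(1)$ via the logarithmic derivative, and the single identity $\widehat P'(1)=kP(1)-P'(1)$, obtained by differentiating $\widehat P(z)=z^kP(1/z)$, converts this to $k-2\,\widehat P'(1)/\widehat P(1)$, which is exactly the right-hand side since $\widehat P(1)=P(1)=1+a_1+\dots+a_k$ and $\widehat P'(1)=a_1+2a_2+\dots+ka_k$. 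All the individual steps check out (including your appeal to Remark~\ref{rem:3.1} to rule out $r_j=1$, an assumption the paper's inductive computation also needs implicitly, since it divides by $1+a_1+\dots+a_{k-1}=P_{k-1}(1)$). What your approach buys: it avoids the paper's coefficient bookkeeping entirely, it makes the structural reason for the identity visible (the right-hand side \emph{is} $k-2\,\widehat P'(1)/\widehat P(1)$, with $\widehat P$ the denominator polynomial of Theorem~\ref{teorema}), and it generalizes immediately — evaluating the same logarithmic derivative at $z=-1$ yields Lemma~\ref{lemma_relacion_ay-r}, which the paper dispatches with ``the proof is similar,'' i.e.\ a second induction. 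What the paper's induction buys is self-containedness at a very elementary level, using only the root–coefficient relations it has already displayed; but your version is shorter, cleaner, and arguably the proof the identity ``wants'' to have.
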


\begin{proof}
Let us  prove this relation  by induction.
For $k=1$, we have the polynomial $P_1(z)=a_1+z$ with $a_1=-r_1$. Therefore,
\[
\frac{1+r_1}{1-r_1}=1+\frac{2r_1}{1-r_1}= 1- 2 \frac{a_1}{1+a_1}.
\]

Now we assume that  for  the degree $k-1$ polynomial
\[
P_{k-1}(z)=a_{k-1}+a_{k-2} z+...+a_{1} z^{k-2}+ z^{k-1}
\]
the relationship
\[
 \sum_{j=1}^{k-1} \frac{1+r_j }{1-r_j} = (k-1) - 2 \frac{a_1+2a_2+...+(k-1) a_{k-1}}{1+a_1+a_2+...+a_{k-1}}
\]

\noindent holds. Finally, we prove the relation for the degree $k$ polynomial $P_k(z)=(z-r_k)P_{k-1}(z)=b_k+b_{k-1} z+...+b_{1} z^{k-1}+ z^k$:

\begin{eqnarray*}
\sum_{j=1}^{k} \frac{1+r_j }{1-r_j} &=& \sum_{j=1}^{k-1} \frac{1+r_j }{1-r_j} +\frac{1+r_{k}}{1-r_{k}}=k -1- 2 \frac{a_1+2a_2+...+(k-1) a_{k-1}}{1+a_1+a_2+...+a_{k-1}}+1+\frac{2r_{k}}{1-r_{k}}= \\
 &=& k-2\frac{(a_1+2a_2+...+(k-1) a_{k-1})(1-r_{k})-r_{k}(1+a_1+a_2+...+a_{k-1})}{(1+a_1+a_2+...+a_{k-1})(1-r_{k})}= \\
 &=& k-2\frac{a_1-r_{k}+2(a_2-r_{k}a_1)+3(a_3-r_{k}a_2)+...+(k-1)(a_{k-1}-r_{k}a_{k-2})- k a_{k-1} r_{k}}{(1-r_1)...(1-r_{k-1})(1-r_{k})}= \\
 &=& k - 2 \frac{b_1+2b_2+...+(k-1) b_{k-1}+k b_{k}}{1+b_1+b_2+...+b_{k}},
\end{eqnarray*}
where the relations (\ref{relacion_ayr}) have been used.
\end{proof}

\begin{lemma}\label{lemma_relacion_ay-r}
Given a polynomial $P(z)=a_k+a_{k-1} z+...+a_{1} z^{k-1}+ z^k$  with roots $r_i$, $i=1,...,k$, the following relationship holds:
\[
 \sum_{j=1}^{k} \frac{1-r_j }{1+r_j} = k - 2 \frac{-a_1+2 a_2-...+(-1)^k k a_k}{1-a_1+a_2+...+(-1)^k  a_k}.
\]
\end{lemma}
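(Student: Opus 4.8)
The plan is to obtain this identity directly from Lemma~\ref{lemma_relacion_ayr} by the reflection $z\mapsto -z$, rather than running the induction again. The guiding observation is that the summand on the left, $\frac{1-r_j}{1+r_j}$, equals $\frac{1+(-r_j)}{1-(-r_j)}$, which is precisely the summand of Lemma~\ref{lemma_relacion_ayr} evaluated at the reflected root $-r_j$. Hence the whole identity should follow by applying the previous lemma to the polynomial whose roots are $-r_1,\dots,-r_k$.

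First I would introduce the auxiliary polynomial $\tilde{P}(z)=(-1)^k P(-z)$. Since $P(z)=\prod_{i=1}^k(z-r_i)$, a short computation gives $\tilde{P}(z)=\prod_{i=1}^k\bigl(z-(-r_i)\bigr)$, so $\tilde{P}$ is monic of degree $k$ with roots $-r_i$. Writing $P(z)=\sum_{m=0}^k a_m z^{k-m}$ with $a_0=1$, the same substitution yields $\tilde{P}(z)=\sum_{m=0}^k(-1)^m a_m z^{k-m}$; that is, in the notation of Lemma~\ref{lemma_relacion_ayr} the coefficients of $\tilde{P}$ are $\tilde{a}_m=(-1)^m a_m$ (with $\tilde{a}_0=1$).

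Then I would apply Lemma~\ref{lemma_relacion_ayr} to $\tilde{P}$. Its left-hand side, $\sum_{j=1}^k\frac{1+(-r_j)}{1-(-r_j)}$, is exactly $\sum_{j=1}^k\frac{1-r_j}{1+r_j}$, the left-hand side we want. On the right-hand side, substituting $\tilde{a}_m=(-1)^m a_m$ turns the numerator $\sum_{m=1}^k m\,\tilde{a}_m$ into $\sum_{m=1}^k(-1)^m m\,a_m=-a_1+2a_2-\cdots+(-1)^k k a_k$ and the denominator $1+\sum_{m=1}^k\tilde{a}_m$ into $1+\sum_{m=1}^k(-1)^m a_m=1-a_1+a_2-\cdots+(-1)^k a_k$, which is exactly the stated identity.

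The only thing to watch is the sign bookkeeping in passing from $a_m$ to $(-1)^m a_m$, but this is entirely routine and there is no real obstacle. As with the previous lemma one needs $1+r_j\neq 0$ for each $j$; equivalently $z=-1$ is not a root of $P$, so the denominator $1-a_1+\cdots+(-1)^k a_k=\prod_i(1+r_i)$ does not vanish. Alternatively, one could mirror the proof of Lemma~\ref{lemma_relacion_ayr} verbatim by induction on $k$, splitting off the factor $(z+r_k)$ in place of $(z-r_k)$, but the reduction above is shorter and reuses the computation already carried out.
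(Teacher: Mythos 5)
Your proof is correct, and it takes a genuinely different route from the paper. The paper's proof of this lemma is simply to re-run the induction of Lemma~\ref{lemma_relacion_ayr} with the signs adjusted (it literally says ``the proof is similar to the previous lemma'', i.e.\ one splits off the new root factor and repeats the coefficient bookkeeping via the relations~(\ref{relacion_ayr})). You instead deduce the identity from Lemma~\ref{lemma_relacion_ayr} as a black box, via the reflection $\tilde{P}(z)=(-1)^k P(-z)$: your computations $\tilde{P}(z)=\prod_{i=1}^k\bigl(z+r_i\bigr)$ and $\tilde{a}_m=(-1)^m a_m$ are right (with the paper's convention that $a_m$ multiplies $z^{k-m}$ and $a_0=1$), and substituting these into the previous lemma produces exactly the stated numerator $-a_1+2a_2-\cdots+(-1)^k k a_k$ and denominator $1-a_1+a_2-\cdots+(-1)^k a_k$. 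Your reduction buys brevity and transparency --- it avoids duplicating the inductive telescoping and makes explicit that the two lemmas are the same statement conjugated by $z\mapsto -z$, which is precisely the symmetry under which these $\lambda^2$-odd operators are invariant (and under which $z=1$ and $z=-1$ are exchanged, matching the parallel structure of Propositions~\ref{estabilidad_1} and~\ref{estabilidad_-1}); the paper's re-induction buys only self-containedness. Your side remark on nonvanishing is also the correct analogue of Remark~\ref{rem:3.1}: one needs $z=-1$ not to be a root of $P$, i.e.\ $\prod_{i=1}^k(1+r_i)=1-a_1+a_2-\cdots+(-1)^k a_k\neq 0$, just as the paper assumes $1+a_1+\cdots+a_k\neq 0$ for the previous lemma.
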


\begin{proof}
The proof is similar to the previous lemma.
\end{proof}

We continue by studying some properties of the Newton-like operators. We  focus on fixed points. Let $f$ be an operator obtained by applying a root finding algorithm, that can be described by Theorem~\ref{prop:symn}, on  a quadratic polynomial $p(z)=z^2-c$. Then, the points $\pm\sqrt{c}$ are attracting fixed points of $f$. Assume also that $f(\infty)=\infty$.  In Proposition~\ref{prop:quadraticsym}, the operator (\ref{op}) was obtained by applying a conjugacy that sends the fixed points $-\sqrt{c}, \sqrt{c},$ and $\infty$ to 0, $\infty$, and 1, respectively. Therefore, $0$ and $\infty$ are fixed points under (\ref{op}) which correspond to the roots of the polynomial while $1$ is a \textit{strange} fixed point (a fixed point which does not correspond to a root). Moreover, the term $z^n$ in  (\ref{op}) indicates that the method has order of convergence $n$ to the roots. Next statement, whose proof is straightforward, indicates that the points $0$, $1$, and $\infty$ are fixed points of (\ref{op}) (even if the operator does not come from a root finding algorithm).

\begin{proposition}\label{0y1}
The points $z=0$, $z=\infty$ and $z=1$ are fixed points of operator (\ref{op}). Moreover $z=-1$ is also a fixed point of (\ref{op}) if $n+k$ is odd and $z=-1$ is a preimage of $z=1$ if $n+k$ is even.
\end{proposition}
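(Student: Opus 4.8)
The plan is to evaluate the operator $O$ of (\ref{op}) at each of the four candidate points by direct substitution, working mainly from the product form $O(z)=z^n\prod_{i=1}^k (z-r_i)/(1-r_iz)$, in which the factors cancel more transparently than in the polynomial quotient.

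First I would dispose of $z=0$ and $z=\infty$. At $z=0$ the product equals $\prod_{i=1}^k(-r_i)=a_k\neq 0$, so $O(0)=0^n\cdot a_k=0$ since $n\geq 1$. At $z=\infty$ a degree count suffices: the numerator of $O$ has degree $n+k$ and the denominator has degree $k$ (with leading coefficient $a_k\neq 0$), and as the numerator degree strictly exceeds the denominator degree we get $O(\infty)=\infty$.

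Next, for $z=1$ I would substitute into the product form, where each factor becomes $(1-r_i)/(1-r_i)=1$, giving $O(1)=1$. Equivalently, in the polynomial form the numerator $a_k+\dots+a_1+1$ and the denominator $1+a_1+\dots+a_k$ coincide; here one relies on the standing hypothesis $1+a_1+\dots+a_k\neq 0$ from Remark~\ref{rem:3.1} (equivalently $r_i\neq 1$ for all $i$) so that no $0/0$ indeterminacy occurs.

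The only step demanding real care, and the one in which the parity of $n+k$ enters, is $z=-1$. Substituting into each factor gives $(-1-r_i)/(1-r_i(-1))=-(1+r_i)/(1+r_i)=-1$, so $O(-1)=(-1)^n\prod_{i=1}^k(-1)=(-1)^{n+k}$. Thus $O(-1)=-1$ when $n+k$ is odd, making $z=-1$ a fixed point, and $O(-1)=1$ when $n+k$ is even, so that $z=-1$ is a preimage of the fixed point $z=1$. The subtlety to flag is the degenerate case $r_i=-1$, for which the corresponding factor $(z+1)/(1+z)$ is identically $1$ rather than contributing $-1$; exactly as in Remark~\ref{rem:3.1} such a factor cancels and is absorbed into a smaller value of $k$, so I would assume throughout that $z=-1$ is not a root of $P$, which legitimises the computation.
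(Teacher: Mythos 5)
Your proof is correct and is essentially the paper's own argument: the paper declares this proposition's proof straightforward and omits it, and your direct substitution of $z=0$, $\infty$, $1$, $-1$ into the product form $O(z)=z^n\prod_{i=1}^k \frac{z-r_i}{1-r_iz}$, yielding $O(-1)=(-1)^{n+k}$, is exactly the intended verification. Your extra flag that $r_i=-1$ must be excluded (since the factor $(z+1)/(1+z)$ cancels identically, reducing $k$ and flipping the parity of $n+k$) is a genuine refinement the paper passes over, parallel to its treatment of $r_i=1$ in Remark~\ref{rem:3.1}.
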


When a Newton-like operator (\ref{op}) comes from a one-parameter family of numerical methods, the coefficients $a_i$ usually have a linear dependence with respect to the parameter of the family, as it can be observed in the following subsections.
In paper \cite{CCV-gradon+k}, we study a family of Newton-like operators where the coefficients have a quadratic dependence on the parameter that produces a duplicity of the information in the parameter plane. This duplicity was avoided by redefining the parameter in order to obtain a linear dependence.

As mentioned before, the point $z=1$ is a strange fixed point of the operator (\ref{op}), so it is important to analyse its dynamical behaviour. Assuming a linear relationship between  the coefficients and the parameter of the family  and using Lemma \ref{lemma_relacion_ayr}, we obtain the regions where the point  $z=1$ is attracting in terms of the coefficients.

\begin{proposition}\label{estabilidad_1}
Let $a_i=A_i+ B_i \alpha$ be the linear relationships between the coefficient $a_i$ of the operator (\ref{op}) and the parameter $\alpha$ of the family of rational operators,  with $A_i, B_i\in \R $ and $\alpha\in \C$. Let us denote:

\begin{eqnarray*}
A&=& n+k+ \sum_{j=1}^{k} (n+k-2j)A_j, \  \  \ 
B= \sum_{j=1}^{k} (n+k-2j)B_j, \\ 
A'&=& 1+ \sum_{j=1}^{k} A_j,   \  \  \
B'= \sum_{j=1}^{k} B_j, \\
c&=&(AB-A'B')/(B^2-B'^2), \  \  \ r=(A'B-AB')/(B^2-B'^2).
\end{eqnarray*}

Then, the fixed point $z=1$ satisfies the following statements:
\begin{enumerate}

  \item For $B^2-B'^2 \neq 0 $,
\begin{enumerate}[i)]
 \item $z=1$ is indifferent on the circle C: $\displaystyle \left| \alpha +c  \right| =\left| r\right| $,
  \item $z=1$ is attracting inside C if $B^2-B'^2 > 0$ and outside C if $B^2-B'^2 < 0$. 

  \item $z=1$ is repelling outside C if $B^2-B'^2 > 0$ and inside C if $B^2-B'^2 < 0$.
\end{enumerate}

  \item For $B^2-B'^2 =0$:
  \begin{enumerate}
      \item If $B=B'\neq 0$, then
      \begin{enumerate}[i)]
       \item $z=1$ is indifferent if $A=A'$,
       \item $z=1$ is attracting if $Re(\alpha)<-\frac{A+A'}{2B}$ and $B(A-A')>0$ or \\
\hspace*{3cm} \  $Re(\alpha)>-\frac{A+A'}{2B}$ and $B(A-A')<0$. 
\item $z=1$ is repelling in other case.
 \end{enumerate}

\item If $B=-B'\neq 0$, then
      \begin{enumerate}[i)]
       \item $z=1$ is indifferent if $A=-A'$,
       \item $z=1$ is attracting if $Re(\alpha)<\frac{A'-A}{2B}$ and $B(A+A')>0$ or \\
\hspace*{3cm} \  $Re(\alpha)>\frac{A'-A}{2B}$ and $B(A+A')<0$.
      
\item $z=1$ is repelling in other case.
 \end{enumerate}

\item If $B=B'= 0$, then 

 \begin{enumerate}[i)]
       \item $z=1$ is indifferent if $|A|=|A'|$,
       \item $z=1$ is attracting if $|A|<|A'|$,
       
    \item $z=1$ is repelling  $|A|>|A'|$.
 \end{enumerate} 

\end{enumerate}
\end{enumerate}

Moreover, if $ \alpha=-\frac{A}{B} $ the fixed point $z=1$ is superattracting.
\end{proposition}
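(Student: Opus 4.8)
The plan is to reduce the whole statement to a computation of the multiplier $O'(1)$ and then to convert the stability trichotomy $|O'(1)| \lessgtr 1$ into an elementary inequality for the complex parameter $\alpha$. First I would evaluate the derivative formula (\ref{dop}) at $z=1$. By Remark~\ref{rem:3.1} we assume $r_i\neq 1$, so the product $\prod_{i=1}^k (1-r_i)/(1-r_i)=1$, the factor $z^{n-1}$ and the leading $z$ both become $1$, and each summand simplifies via $\frac{1-r_j^2}{(1-r_j)(1-r_j)}=\frac{1+r_j}{1-r_j}$. This gives
\[
O'(1)=n+\sum_{j=1}^k\frac{1+r_j}{1-r_j}.
\]
Applying Lemma~\ref{lemma_relacion_ayr} to rewrite the sum, placing everything over the common denominator $1+a_1+\dots+a_k$, and substituting $a_j=A_j+B_j\alpha$, the numerator collapses exactly to $A+B\alpha$ and the denominator to $A'+B'\alpha$, with $A,B,A',B'$ as defined in the statement. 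Hence $O'(1)=(A+B\alpha)/(A'+B'\alpha)$, and the entire problem becomes the study of where $|A+B\alpha|$ is smaller than, equal to, or larger than $|A'+B'\alpha|$.

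Next I would square this comparison. Writing $\alpha=x+iy$ and using that $A,B,A',B'\in\R$, the difference $|A+B\alpha|^2-|A'+B'\alpha|^2$ equals
\[
(B^2-B'^2)(x^2+y^2)+2(AB-A'B')\,x+(A^2-A'^2).
\]
Everything is then governed by the sign of the coefficient $B^2-B'^2$ of $x^2+y^2$. When $B^2-B'^2\neq 0$ I would divide through and complete the square; the center lands at $\alpha=-c$, and the crucial algebraic step is the identity
\[
(AB-A'B')^2-(A^2-A'^2)(B^2-B'^2)=(A'B-AB')^2,
\]
which shows the squared radius is precisely $r^2$, so the indifference locus is the circle $|\alpha+c|=|r|$. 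The sign of $B^2-B'^2$ then dictates whether attraction corresponds to the interior or the exterior: since the difference above factors as $(B^2-B'^2)\bigl[(x+c)^2+y^2-r^2\bigr]$, attraction means inside the circle when $B^2-B'^2>0$ and outside when $B^2-B'^2<0$, yielding items 1.i)--1.iii).

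Finally I would treat the degenerate stratum $B^2-B'^2=0$, where the $x^2+y^2$ terms vanish and the boundary becomes a vertical line (or the condition is $\alpha$-independent). Splitting into $B=B'\neq 0$, $B=-B'\neq 0$, and $B=B'=0$, the difference factors respectively as $(A-A')\bigl[2Bx+(A+A')\bigr]$, $(A+A')\bigl[2Bx+(A-A')\bigr]$, and the constant $A^2-A'^2$; reading off signs (and noting that $Re(\alpha)=x$) gives the half-plane thresholds and the modulus conditions of items 2.a)--2.c). The superattracting claim is then immediate, since $O'(1)=0$ forces $A+B\alpha=0$, i.e.\ $\alpha=-A/B$. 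No single step here is genuinely deep once $O'(1)=(A+B\alpha)/(A'+B'\alpha)$ is in hand; the main obstacle is purely the organized sign bookkeeping across the degenerate cases together with verifying the radius identity above, rather than any conceptual difficulty.
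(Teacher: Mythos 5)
Your proposal is correct and follows essentially the same route as the paper's proof: evaluate $O'(1)$ from (\ref{dop}) together with Lemma~\ref{lemma_relacion_ayr}, substitute $a_j=A_j+B_j\alpha$ to obtain $O'(1)=(A+B\alpha)/(A'+B'\alpha)$, and then compare $|A+B\alpha|$ with $|A'+B'\alpha|$ by writing $\alpha=x+iy$ and completing the square. If anything, you are more explicit than the paper, which asserts the disk equations and calls the degenerate cases straightforward, whereas you verify the radius identity $(AB-A'B')^2-(A^2-A'^2)(B^2-B'^2)=(A'B-AB')^2$ and the factorizations $(A-A')\bigl[2Bx+(A+A')\bigr]$ and $(A+A')\bigl[2Bx+(A-A')\bigr]$ in the cases $B=\pm B'\neq 0$.
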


\begin{proof}
By sustituting the point $z=1$ in (\ref{dop})
and by applying the result obtained in Lemma \ref{lemma_relacion_ayr}, we  have:
\[
\left| O^\prime (1)\right|= \left|n+\sum _{j=1}^{k}\frac{1+r_j}{1-r_j}    \right|=\left|n+k-2 \frac{a_1+2a_2+...+k a_k}{1+a_1+a_2+...+a_k}\right|.
\]

Assuming the relation  $a_i=A_i+ B_i \alpha$, we obtain:
\[
 \left| O^\prime (1)\right|= \left|\frac{(n+k)(1+A_1+B_1 \alpha +...+A_k+B_k \alpha)-2 (A_1+B_1 \alpha+...+k (A_k+B_k \alpha))}{1+A_1+B_1 \alpha+...+A_k+B_k \alpha}\right|
\]
that can be written as:
\[
 \left| O^\prime (1)\right|=\left|\frac{n+k+\sum_{j=1}^{k} (n+k-2j)A_j +\alpha \sum_{j=1}^{k} (n+k-2j)B_j}{1+\sum_{j=1}^{k} A_j +\alpha \sum_{j=1}^{k}B_j}\right|=\left|\frac{A +\alpha B}{ A' +\alpha B'}\right|.
\]

Now, we look for the values of the parameter that make $\left| O^\prime (1)\right|<1.$
As $\alpha$ is a complex parameter, we can write it as $\alpha = p+i q$, $\ p,q\in \R $. Then,
\[
\left|A+B(p+i q)|<|A'+B'(p+i q)\right| \  \   \Rightarrow  \  \   p^2(B^2-B'^2)+2p(AB-A'B')+q^2(B^2-B'^2)<A'^2-A^2.
 \]
For the case $B^2-B'^2 \neq 0$, this expresion can be divided by $B^2-B'^2$ and, after looking for a perfect square, it can be
 written, in the complex plane,  as the equation of the disk:
\[
\left(p+\frac{AB-A'B'}{B^2-B'^2}\right)^2+ q^2<\left(\frac{A'B-AB'}{B^2-B'^2}\right)^2
\]
if $B^2-B'^2>0$, or the disk
\[
\left(p+\frac{AB-A'B'}{B^2-B'^2}\right)^2+ q^2>\left(\frac{A'B-AB'}{B^2-B'^2}\right)^2
\]
if $B^2-B'^2<0$, obtaining the regions where the point $z=1$ is attractive.

The point $z=1$ is indifferent on the circle
\[
\left(p+\frac{AB-A'B'}{B^2-B'^2}\right)^2+ q^2=\left(\frac{A'B-AB'}{B^2-B'^2}\right)^2
\] and the point $z=1$ is repelling for the other regions.

For the case $B^2-B'^2 =0$, the proof of the latter cases is  straightforward without more than substituting the conditions in $\left| O^\prime(1)\right|$.

Finally, from  $O^{\prime}(1)=0$, we obtain that the point  $z = 1$ is superattracting  for $\alpha = -A/B$.
\end{proof}

Moreover, if $z=-1$ is also a fixed point of (\ref{op}), i.e.\ $n+k$ is odd, we have the following result. The proof is similar to the previous proposition from the  substitution of $z=-1$ in (8) and using Lemma \ref{lemma_relacion_ay-r}.

\begin{proposition}\label{estabilidad_-1}
Let $a_i=A_i+ B_i \alpha$ be the lineal relationship between the coefficients $a_i$ of the operator (\ref{op}) and the parameter $\alpha$ of the family of rational operators,  with $A_i, B_i \in \R $, $\alpha\in \C$  and $n+k$ odd. Let us denote:
\begin{eqnarray*}
C&=& n+ k+ \sum_{j=1}^{k} (-1)^j (n+ k-2j)A_j,  \  \  \ D= \sum_{j=1}^{k} (-1)^j (n+ k-2j)B_j,    \\
C'&=& 1+ \sum_{j=1}^{k} (-1)^j  A_j, \  \  \  D'= \sum_{j=1}^{k}(-1)^j B_j,\\
c&=&(CD-C'D')/(D^2-D'^2), \  \  \   r=(C'D-CD')/(D^2-D'^2).
\end{eqnarray*}

Then, the fixed point $z=-1$ satisfies the following statements:
\begin{enumerate}

\item For $D^2-D'^2 \ne 0 $,
\begin{enumerate}[i)]
  \item $z=-1$ is indifferent on the circle S: $\displaystyle \left| \alpha +c  \right| =\left| r\right| $.
   \item  $z=-1$ is attracting inside the circle S if $D^2-D'^2 > 0 $ and outside the circle S if $D^2-D'^2<0 $.
  \item $z=-1$ is repelling outside the circle S if $D^2-D'^2>0 $ and inside the circle S if $D^2-D'^2<0 $.
\end{enumerate}

\item For $D^2-D'^2 =0$, 
\begin{enumerate}[a)]
\item  If $D=D'\neq 0$, then 

\begin{enumerate}[i)]
       \item $z=-1$ is indifferent if $C=C'$.
       \item $z=-1$ is attracting if $Re(\alpha)<-\frac{C+C'}{2D}$ and $D(C-C')>0$ or \\
\hspace*{3.25cm} \  $Re(\alpha)>-\frac{C+C'}{2D}$ and $D(C-C')<0$. 
\item $z=-1$ is repelling in other case.
 \end{enumerate}

\item  If $D=-D'\neq 0$, then 

  \begin{enumerate}[i)]
       \item $z=-1$ is indifferent if $C=-C'$,
       \item $z=-1$ is attracting if $Re(\alpha)<\frac{C'-C}{2D}$ and $D(C+C')>0$ or \\
\hspace*{3.25cm} \  $Re(\alpha)>\frac{C'-C}{2D}$ and $D(C+C')<0$.
      
\item $z=-1$ is repelling in other case.
 \end{enumerate}

\item If $D=-D'= 0$, then

 \begin{enumerate}[i)]
       \item $z=-1$ is indifferent if $|C|=|C'|$,
       \item $z=-1$ is attracting if $|C|<|C'|$,
       
    \item $z=-1$ is repelling  $|C|>|C'|$.

\end{enumerate}

\end{enumerate}
\end{enumerate}

Moreover, if $ \alpha=-\frac{C}{D} $ the fixed point $z=-1$ is superattracting.
\end{proposition}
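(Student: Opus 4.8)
The plan is to follow the proof of Proposition~\ref{estabilidad_1} almost line for line; the only genuinely new ingredients are the evaluation of the multiplier at $z=-1$ and the use of Lemma~\ref{lemma_relacion_ay-r} in place of Lemma~\ref{lemma_relacion_ayr}. Since the hypothesis $n+k$ odd guarantees (by Proposition~\ref{0y1}) that $z=-1$ is a genuine fixed point, the whole strategy of translating $|O'(-1)|<1$ into a condition on $\alpha$ carries over unchanged.

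First I would substitute $z=-1$ directly into the derivative (\ref{dop}). The prefactor collapses because $(-1)^{n-1}\prod_{i=1}^{k}\frac{-1-r_i}{1+r_i}=(-1)^{n-1}(-1)^k=(-1)^{n+k-1}=1$, using precisely that $n+k$ is odd, and each summand simplifies via $\frac{1-r_j^2}{(-1-r_j)(1+r_j)}=-\frac{1-r_j}{1+r_j}$. This gives $O'(-1)=n+\sum_{j=1}^{k}\frac{1-r_j}{1+r_j}$. Applying Lemma~\ref{lemma_relacion_ay-r} rewrites the sum in terms of the coefficients as $O'(-1)=n+k-2\frac{-a_1+2a_2-\dots+(-1)^k k a_k}{1-a_1+a_2+\dots+(-1)^k a_k}$. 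Substituting the linear relation $a_j=A_j+B_j\alpha$ and collecting powers of $\alpha$ turns numerator and denominator into affine functions of $\alpha$, yielding exactly $O'(-1)=\frac{C+\alpha D}{C'+\alpha D'}$ with the constants $C,D,C',D'$ as defined in the statement.

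The remaining analysis is identical to the $z=1$ case. Writing $\alpha=p+iq$ with $p,q\in\R$ (all four constants are real since $A_j,B_j\in\R$), the condition $|O'(-1)|<1$ becomes $|C+\alpha D|^2<|C'+\alpha D'|^2$, i.e.\ $(D^2-D'^2)(p^2+q^2)+2(CD-C'D')p+(C^2-C'^2)<0$. When $D^2-D'^2\neq 0$ I divide by $D^2-D'^2$ (reversing the inequality when this quantity is negative) and complete the square; the identity $(CD-C'D')^2-(C^2-C'^2)(D^2-D'^2)=(CD'-C'D)^2$ shows the boundary is the circle $|\alpha+c|=|r|$ and produces the attracting/repelling dichotomy of item 1. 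When $D^2-D'^2=0$ the quadratic terms cancel and the inequality is affine in $p$; the subcase $D=D'$ factors as $(C-C')[2Dp+C+C']<0$ and the subcase $D=-D'$ as $(C+C')[2Dp+C-C']<0$, which give the half-plane conditions of item 2, while $D=D'=0$ reduces the multiplier to the constant $C/C'$ and hence to the modulus comparison. The superattracting case is $C+\alpha D=0$, i.e.\ $\alpha=-C/D$.

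I expect no substantive obstacle: the argument is a sign-careful transcription of Proposition~\ref{estabilidad_1}. The only points demanding attention are confirming that the parity hypothesis $n+k$ odd makes the prefactor collapse to $+1$ rather than $-1$ (otherwise $z=-1$ would fail to be fixed and the whole computation would be vacuous), and keeping the direction of the inequality correct when dividing by $D^2-D'^2<0$.
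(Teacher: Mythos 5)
Your proposal is correct and is exactly the argument the paper intends: the paper omits the details, stating only that the proof follows that of Proposition~\ref{estabilidad_1} by substituting $z=-1$ into (\ref{dop}) and invoking Lemma~\ref{lemma_relacion_ay-r}, which is precisely what you carry out, and your computations (the prefactor collapsing to $+1$ via $(-1)^{n+k-1}=1$, the multiplier $O'(-1)=(C+\alpha D)/(C'+\alpha D')$, and the circle/half-plane case analysis) all check out.
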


As was observed in \cite{CCV-gradon+k}, the dynamical behaviour of the rational operators depends on the relationship between the exponent $n$ of $z$ and the degree $k$ of the polynomial in the rational function of the operator. In fact, the numerical behaviour of the method is very bad when $n<k$ and the better numerical behaviour occurs when $n>k+1$.

\section{ Examples of Newton-like operators} \label{casos}

In this section we overview different methods in the literature. For the case $n=k$, we show some results for a particular case of dependence of the parameter. Along the paper we mention examples of numerical methods for which there is exactly one free critical point, modulo symmetry; so, we can plot their parameter planes iterating a single critical point.

\subsection{The case $n>k$}\label{n>k+1}

The  first family of numerical methods that we studied  satisfying these conditions is the Chebyshev-Halley family, defined by (\ref{fcheby}). A more complete study of the dynamical behaviour of this family can be found in \cite{CTV-gato} when this family is applied on quadratic polynomials and in \cite{CCV-gradon}, when it is applied on a family of $n$-degree polynomials.

After applying the corresponding algorithm on  two-degree polynomials $p(z)=z^2-c$ and enforcing the conjugacy map, we obtained the operator:
\begin{equation}
O_{\alpha}(z)=z^3 \frac{z-2(\alpha-1)}{1-2(\alpha-1)z},
\end{equation}
which corresponds to the operator (\ref{op}) for $n=3$ and $k=1$.
The parameter plane can be seen in Figure \ref{gato}.
\begin{figure}[h!!]
    \centering
    \subfigure{
    \begin{tikzpicture}
    \begin{axis}[width=7cm,  axis equal image, scale only axis,  enlargelimits=false, axis on top]
      \addplot graphics[xmin=-1,xmax=5,ymin=-3,ymax=3] {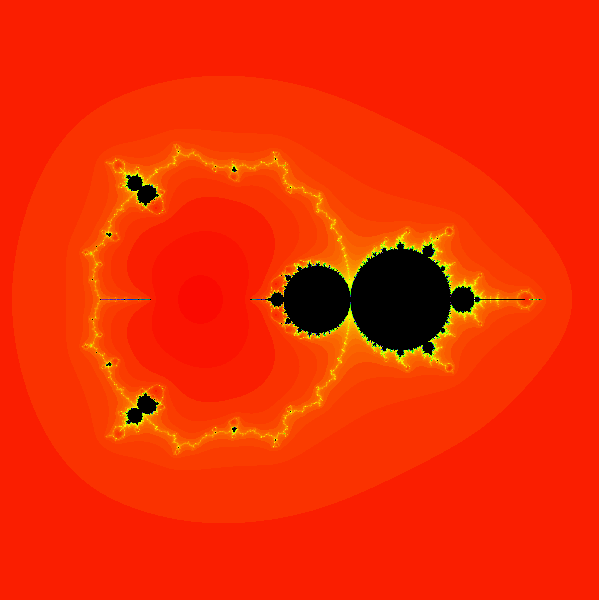};
    \end{axis}
  \end{tikzpicture}}
    \caption{\small{Parameter plane of the Chebyshev-Halley family, $n=3$, $k=1$.}}
    \label{gato}
\end{figure}
As described in \S~\ref{sec:introhol}, the black zones correspond to values of the parameter for which the corresponding dynamical planes have attractors that do not correspond to the roots of the polynomial $p(z)=z^2-c$.

King's family  is another example of a family that leads to an operator  of the form (\ref{op}) for $n = 4$ and $k = 2$, see \cite{CGTVV-king}. The operator of this iterative scheme is given in (\ref{fking}) which leads to:
\begin{equation} \label{opKing}
O_{\beta}(z)=z^4 \frac{5+2\beta+(4+\beta)z+z^2}{1+(4+\beta)z+(5+2\beta)z^2}
\end{equation}
after applying it on a quadratic family of polynomials. The parameter plane is shown in Figure \ref{king-jarrat} (left).

\begin{figure}[h!!]
    \centering
    \subfigure{
    \begin{tikzpicture}
    \begin{axis}[width=7cm,  axis equal image, scale only axis,  enlargelimits=false, axis on top]
      \addplot graphics[xmin=-6,xmax=5,ymin=-5.5,ymax=5.5] {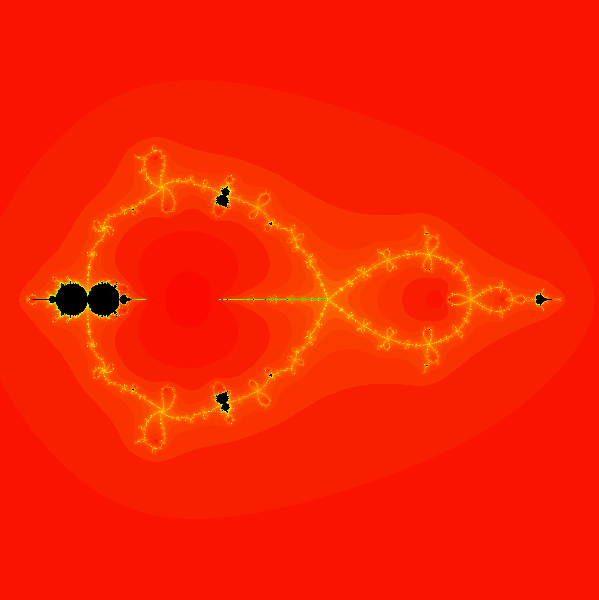};
    \end{axis}
  \end{tikzpicture}}
\subfigure{
	\begin{tikzpicture}
		\begin{axis}[width=7cm,  axis equal image, scale only axis,  enlargelimits=false, axis on top]
			\addplot graphics[xmin=-5,xmax=3,ymin=-4,ymax=4] {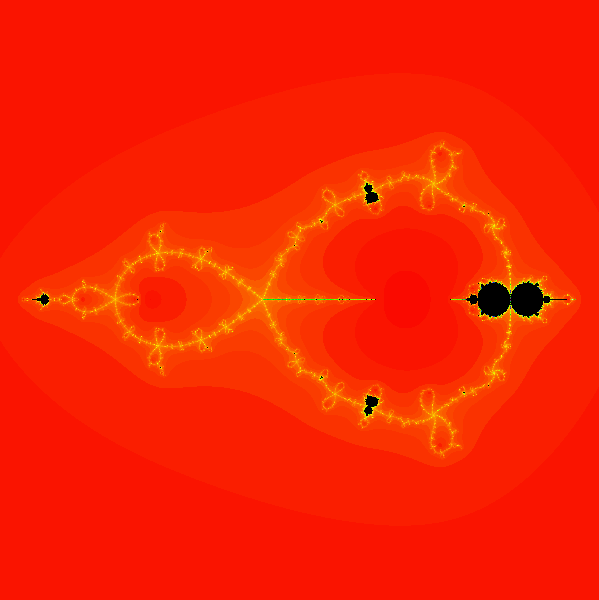};
		\end{axis}
\end{tikzpicture}}
    \caption{\small{Parameter planes of  King's family (left) and a family coming from Jarrat method (right). In both cases $n=4$, $k=2$.}}
    \label{king-jarrat}
\end{figure}

As  mentioned in the introduction, many methods built from Jarratt's method also satisfy the conditions of Theorem~\ref{prop:symn}.  For example, the family of order four studied by Amat et al. in \cite{amat2005} with scheme (\ref{famat}).
Applying it on two-degree polynomials $p(z)=z^2-c$ and enforcing the conjugacy map, the  operator obtained is:
\begin{equation}\label{opJA}
JA_{\beta} \left( z\right) = z^{4}\frac{8\beta-3+(4\beta-6)z-3z^2}{-3+(4\beta-6)z+(8\beta-3)z^2}.
\end{equation}
The parameter plane can be seen in Figure \ref{king-jarrat} (right).

\begin{remark} Let us notice that the operators given in  (\ref{opKing}) and (\ref{opJA}) correspond to the case $n=4$ and $k=2$. In fact, a change $\beta \Rightarrow -\frac{4}{3}\beta -2$ leads to the same operator. So, the parameter planes are the same except for the symmetry induced by the negative sign in the change.
\end{remark}

We find an example of $n=5$ and $k=3$ in the subfamily $S2$ coming from the Ostrowski-Chun methods (see \cite{CV-OstrowskiChun}), whose operator is
\begin{eqnarray} \label{opS2}
OS2_a\left( z\right) &=& z^{5}\frac{14+5a+2(7+2a)z+(6+a)z^2+z^3}{1+(6+a)z+2(7+2a)z^2+(14+5a)z^3}.
\end{eqnarray}
The parameter plane can be seen in Figure \ref{S2}.
\begin{figure}[h!!]
    \centering
    \subfigure{
    \begin{tikzpicture}
    \begin{axis}[width=7cm,  axis equal image, scale only axis,  enlargelimits=false, axis on top]
      \addplot graphics[xmin=-5,xmax=3,ymin=-4,ymax=4] {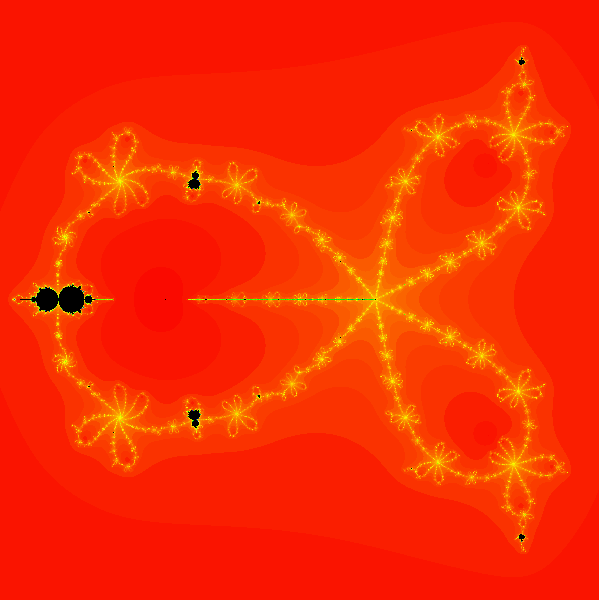};
    \end{axis}
  \end{tikzpicture}}
    \caption{\small{Parameter plane of the subfamily S2, $n=5$, $k=3$}}.
    \label{S2}
\end{figure}

Let us notice that the numerical behaviour of these families is quite good due to the fact that the operators  with presence of strange attractors  correspond to values of the parameter included in the small black zones of the parameter plane.

\subsection{A degenerate case} \label{n=k+1}

An example that coincides with  the degenerate case of Remark \ref{rem:3.1} can be seen in  paper \cite{CV-OstrowskiChun} for the subfamily $S5$ of the Ostrowski-Chun methods. The operator is:
\begin{eqnarray} \label{opS5}
\notag
OS5_{a}\left( z\right) &=& z^4 \frac{-5(7+2a)+(14+5a)z+(14+4a)z^2+(6+a)z^3+z^4}{1+(6+a)z+(14+4a)z^2+(14+5a)z^3-5(7+2a)z^4}=                \\
    &=& -z^{4}\frac{5(7+2a)+(21+5a)z+(7+a)z^2+z^3}{1+(7+a)z+(21+5a)z^2+5(7+2a)z^3}.
\end{eqnarray}
The points $z=1$ and $z=-1$ form an attractive 2-cycle of this operator.

The parameter plane can be seen in Figure \ref{S5} (top). In this figure, green color corresponds to values of the parameter where the free critical point is located in the basin of attraction of the 2-cycle. Black color corresponds to values of the parameter where the orbit of the free critical point does not converge neither to any of the roots of the polynomial nor to the 2-cycle $\{-1,1\}$.

\begin{figure}[h!!]
    \centering
    \subfigure{
    \begin{tikzpicture}
    \begin{axis}[width=7cm,  axis equal image, scale only axis,  enlargelimits=false, axis on top]
      \addplot graphics[xmin=-10.5,xmax=10.5,ymin=-10.5,ymax=10.5] {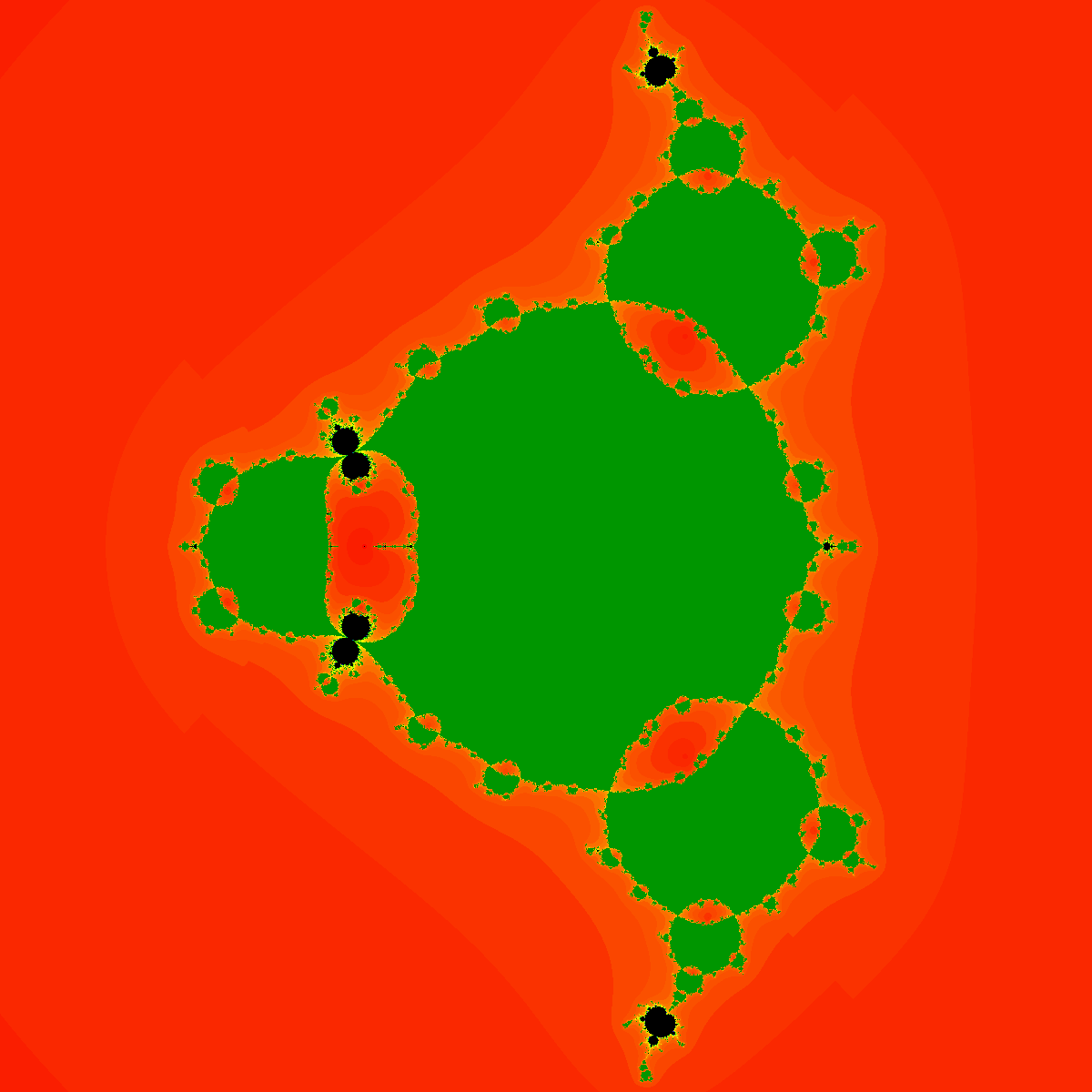};
    \end{axis}
  \end{tikzpicture}}

 \subfigure{

	\begin{tikzpicture}
		\begin{axis}[width=7cm,  axis equal image, scale only axis,  enlargelimits=false, axis on top]
			\addplot graphics[xmin=-9,xmax=5,ymin=-4,ymax=10] {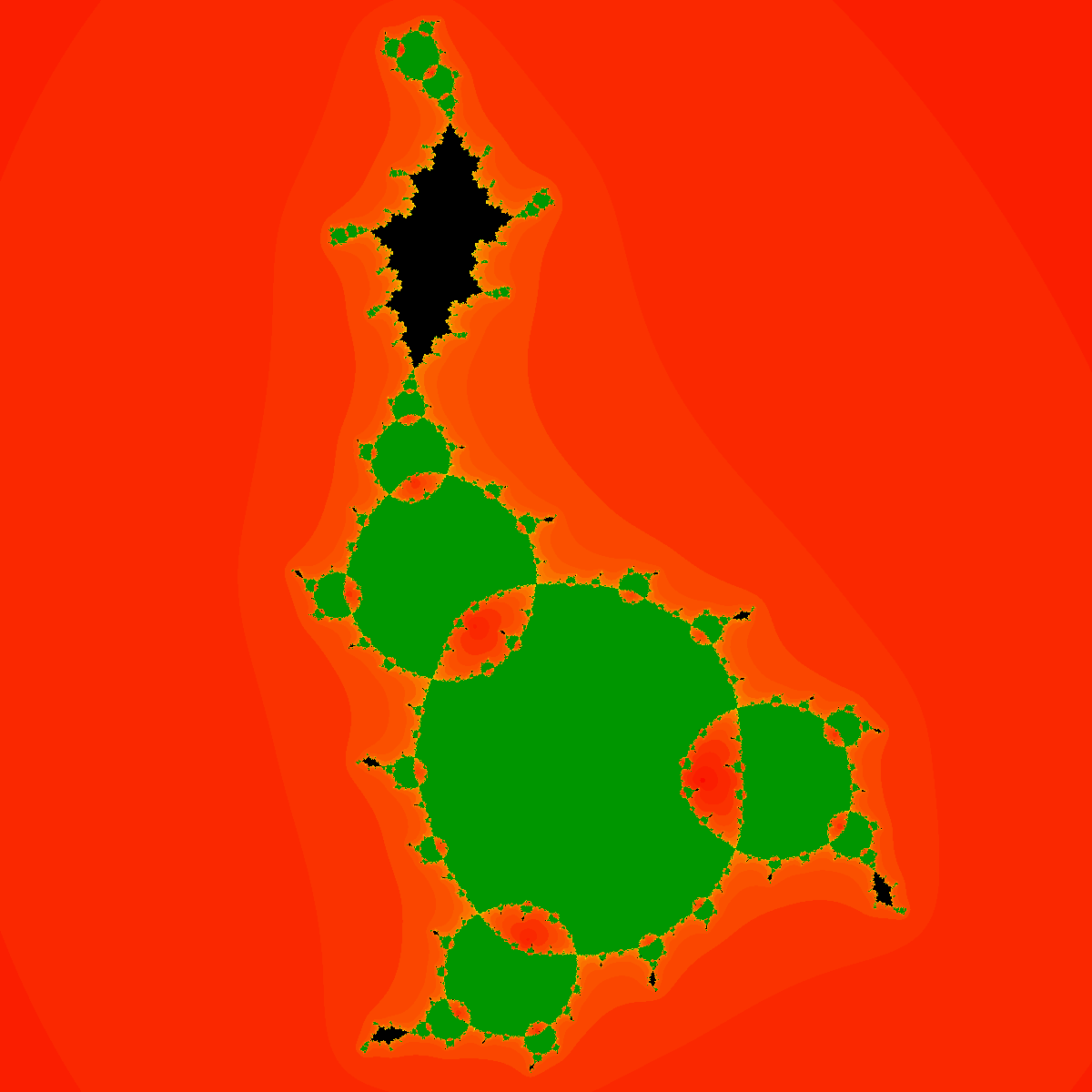};
		\end{axis}
\end{tikzpicture}}
\subfigure{
 \begin{tikzpicture}
    \begin{axis}[width=7cm,  axis equal image, scale only axis,  enlargelimits=false, axis on top]
      \addplot graphics[xmin=-6,xmax=3,ymin=-4.5,ymax=4.5] {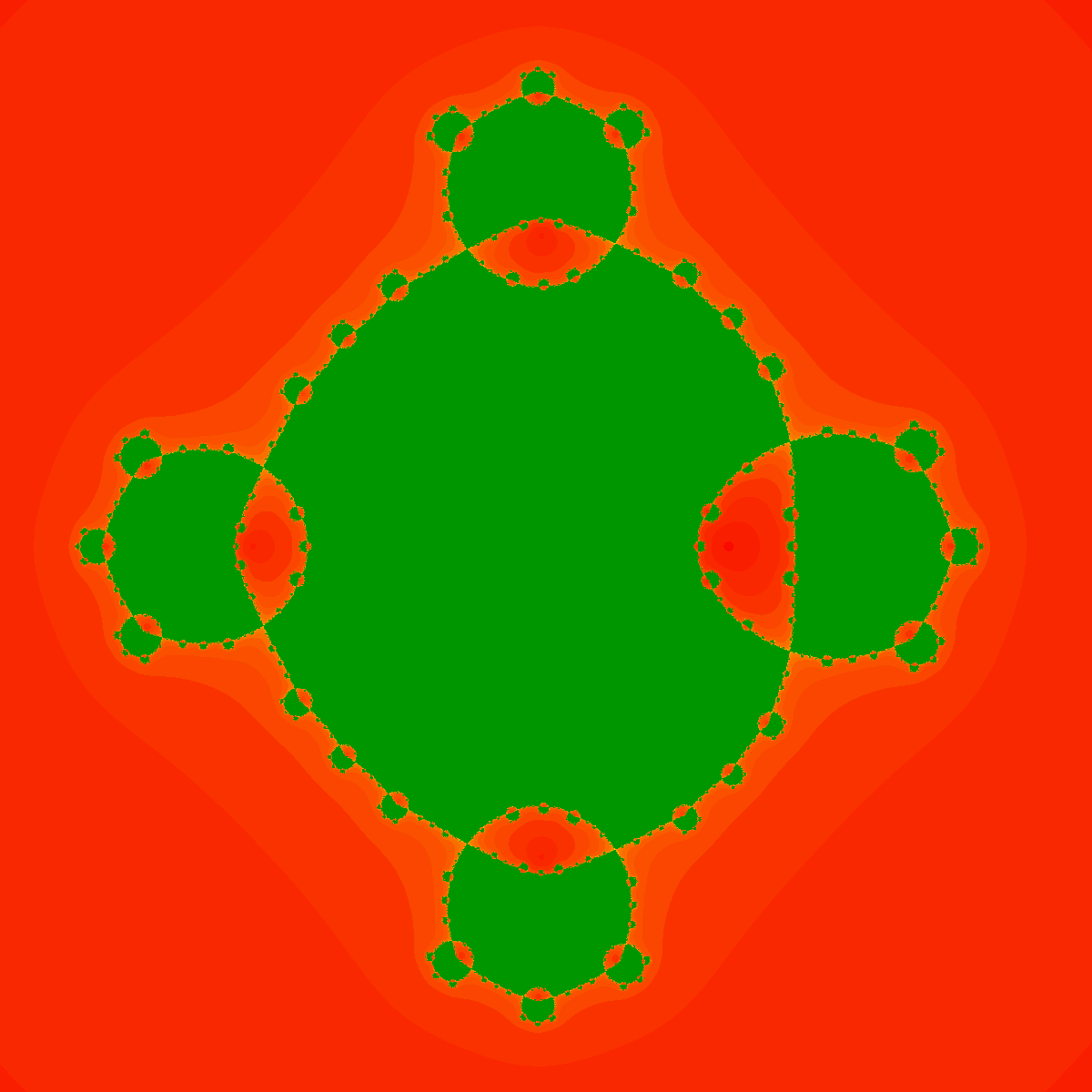};
    \end{axis}
  \end{tikzpicture}}

    \caption{\small{The subfamily S5. The parameter plane (top) and dynamical planes for $a=2-9.3i$ (bottom left) and $a=0$ (bottom right). }}
    \label{S5}
\end{figure}
In Figure \ref{S5} (bottom) we also show two dynamical planes of this family. Colour green indicates the basin of attraction of the 2-cycle $\{-1,1\}$. The left figure corresponds to a value of the parameter where the critical point belongs to the basin of attraction of another attracting cycle.  The right figure corresponds to a value of the parameter for which there is no other stable behaviour  than the basins of attraction of the roots of the polynomial or to the 2-cycle $\{-1,1\}$.

\subsection{The case $n=k$} \label{n=k}

 For $n=k$  the operator is:
\begin{equation}\label{op1}
    O_{\alpha}(z)=z^n \frac{a_n+a_{n-1} z+...+a_{1} z^{n-1}+ z^n}{1+a_{1} z+...+a_{n-1} z^{n-1}+a_n z^n}=
    z^n \prod_{i=1}^{n}\frac{(z-r_i)}{(1-r_i z)}.
\end{equation}
This operator is obtained in many cases (see \cite{CCTV-familiac}, \cite{CV-OstrowskiChun},  \cite{CGMT-pg4}, \cite{ZCT-pg8}, for example).

Next, we consider that all the coefficients of the operator (\ref{op}) are real except $a_n$; that is, $a_i\in \R$ for $1=1,2,...,n-1$ and $a_n=\alpha\in \C$ and we delve deeper into the study of this type of operators.

In this case the point $z=-1$ is a preimage of $z=1$. In the following result we  establish the set of parameters where the fixed point $z=1$ is attracting  for operator (\ref{op1}). We obtain that  $z=1$ is attracting outside a disk with radius related with the coefficients of the polynomial in the numerator of (\ref{op1}). The next proposition is a particular case of Proposition~\ref{estabilidad_1}.

\begin{proposition}\label{prop:estabilida_1n=k}
The fixed point $z=1$ of $O_{\alpha}$ satisfies the following statements:
\begin{enumerate} [i)]
  \item $z=1$ is attracting if $\left| \alpha+A' \right| > \left|A\right| $,
  \item $z=1$ is indifferent if $\left|\alpha+A' \right| = \left|A\right| $,
  \item $z=1$ is repelling if $\left| \alpha+A' \right| < \left|A\right| $,
\end{enumerate}
where $A$ and $A'\neq 0$ are defined as 
\begin{equation*}
   A= 2n+ 2\sum_{j=1}^{n-1} (n-j)a_j,  \  \  \ A'= 1+ \sum_{j=1}^{n-1} a_j.
\end{equation*}

Moreover, if $A=0$ the fixed point $z=1$ is superattracting.
\end{proposition}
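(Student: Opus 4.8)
The plan is to specialize the computation of $O'(1)$ already carried out in the proof of Proposition~\ref{estabilidad_1} to the present situation, in which $a_j\in\R$ for $1\le j\le n-1$ while only $a_n=\alpha$ is allowed to be complex, and the exponent equals the degree, $k=n$. In the language of Proposition~\ref{estabilidad_1} this corresponds to taking $A_j=a_j$, $B_j=0$ for $1\le j\le n-1$ and $A_n=0$, $B_n=1$ in the linear relationships $a_i=A_i+B_i\alpha$.

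First I would recall from that proof, via Lemma~\ref{lemma_relacion_ayr}, the closed form
$$O'(1)=n+k-2\,\frac{a_1+2a_2+\dots+k a_k}{1+a_1+\dots+a_k}.$$
Setting $k=n$, writing $a_n=\alpha$, and clearing the denominator, the two $n\alpha$ contributions in the numerator cancel, which is the key simplification that the case $n=k$ produces. One is left with
$$O'(1)=\frac{2n+2\sum_{j=1}^{n-1}(n-j)a_j}{1+\sum_{j=1}^{n-1}a_j+\alpha}=\frac{A}{\alpha+A'},$$
where $A$ and $A'$ are precisely the real quantities in the statement. Note that the denominator $\alpha+A'=1+a_1+\dots+a_n$ is nonzero by the standing assumption of Remark~\ref{rem:3.1}, so $O'(1)$ is finite and well defined.

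With this expression the trichotomy is immediate. Since $A,A'\in\R$, the inequality $|O'(1)|<1$ reads $|A|<|\alpha+A'|$, so $z=1$ is attracting exactly when $|\alpha+A'|>|A|$, indifferent when equality holds, and repelling when $|\alpha+A'|<|A|$; this is Case~1 of Proposition~\ref{estabilidad_1} specialized to $B=0$, $B'=1$, for which $B^2-B'^2=-1<0$ and the circle $C$ becomes $|\alpha+c|=|r|$ with center $c=A'$ and radius $r=A$. I expect no genuine obstacle here. The only point requiring care is the superattracting assertion: the generic criterion $\alpha=-A/B$ of Proposition~\ref{estabilidad_1} degenerates because $B=0$ in this subcase, so instead one reads the superattracting condition directly off the numerator, namely $O'(1)=0$ if and only if $A=0$.
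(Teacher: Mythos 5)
Your proposal is correct and follows essentially the same route as the paper: the paper's proof likewise specializes Proposition~\ref{estabilidad_1} with $A_i=a_i$, $B_i=0$ for $i=1,\dots,n-1$, $A_n=0$, $B_n=1$ (so $B=0$, $B'=1$), arrives at $|O'(1)|=\left|\frac{A}{A'+\alpha}\right|$, and reads off the trichotomy and the superattracting condition $A=0$ directly. Your additional remarks (the vanishing of the $n\alpha$ terms, $B^2-B'^2=-1<0$ identifying the circle with center $-A'$ and radius $|A|$, and the degeneracy of the $\alpha=-A/B$ criterion) are accurate refinements of details the paper leaves implicit.
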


\begin{proof}
The proof is straightforward from Proposition~\ref{estabilidad_1} taking into account that, in this case, $a_n=\alpha$, $a_i=A_i$ for $i=1...n-1$, $A_n=0$, $B_i=0$ for $i=1...n-1$, $B_n=1$; so $B=0$ and $B'=1$  and $A= 2n+ 2\sum_{j=1}^{n-1} (n-j)a_j$  and $A'= 1+ \sum_{j=1}^{n-1} a_j$. Then,
\[
|O'(1)|=|\frac{A}{A'+\alpha}|
\]
From the previous expression, it is easy to see that $z = 1$ is superattracting, i.e.\ $O_{\alpha}^{\prime}(1)=0$, for $A = 0$.
\end{proof}

This type of operator is obtained in \cite{CCTV-familiac}, where  the $c$-family of iterative methods is applied on quadratic polynomials. After applying the conjugacy map (see \cite{Blanch84})
the operator is:
\begin{equation} \label{opC}
O_{c}\left( z\right)=z^{3}\frac{2\left( 1-2c\right)+5z+4z^2+z^3 }{1+4z+5z^{2}+2\left( 1-2c\right) z^{3}}
\end{equation}
and the parameter plane can be observed in Figure \ref{ppfamiliaC-mariposa} (left). Notice that the set of parameters for which $z=1$ is attracting corresponds to the unbounded black disk.
\begin{figure}[h!!]
    \centering
    \subfigure{
    \begin{tikzpicture}
    \begin{axis}[width=7cm,  axis equal image, scale only axis,  enlargelimits=false, axis on top]
      \addplot graphics[xmin=-9,xmax=13,ymin=-10,ymax=10] {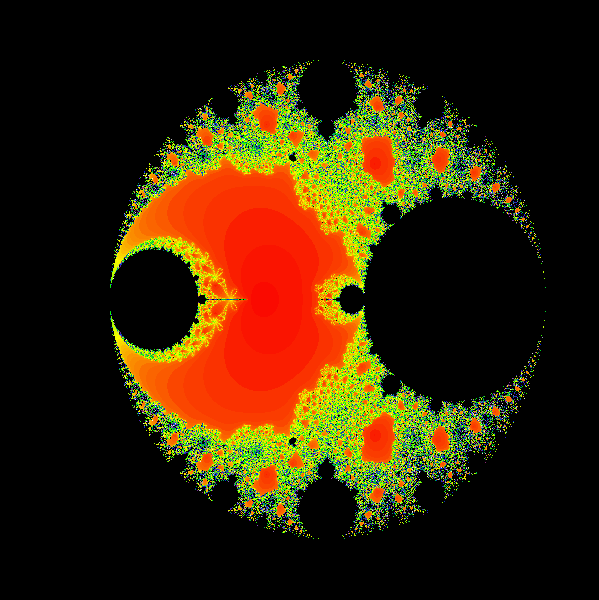};
    \end{axis}
  \end{tikzpicture}}
\subfigure{
	\begin{tikzpicture}
		\begin{axis}[width=7cm,  axis equal image, scale only axis,  enlargelimits=false, axis on top]
			\addplot graphics[xmin=-180,xmax=120,ymin=-150,ymax=150] {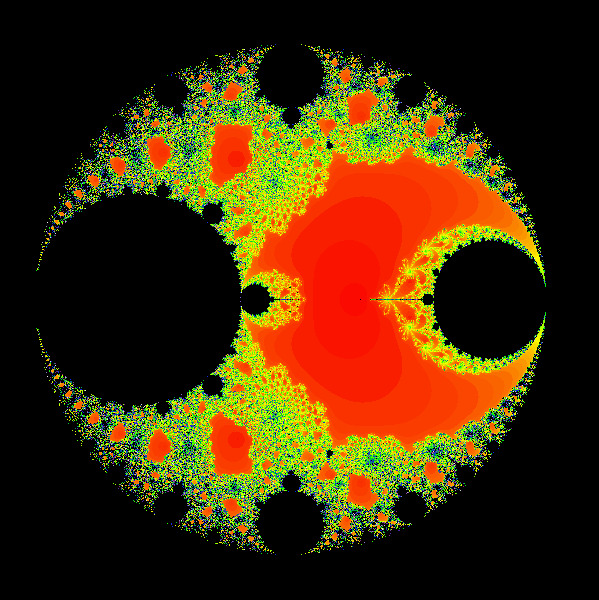};
		\end{axis}
\end{tikzpicture}}

    \caption{\small{Parameter plane of $c-$family for  $n=k=3$ (left) and  the M4 family for $\alpha=\frac{-1+5\beta}{\beta}$ $n=k=4$ (right).}}
    \label{ppfamiliaC-mariposa}
\end{figure}

 A three-step iterative method for solving non-linear equations is studied in \cite{CGMT-pg4}. It is obtained by using the technique of undetermined coefficients and the composition of Newton's scheme with itself, with frozen Jacobian. The authors denote it as M4 and its iterative expression is:
\begin{eqnarray*}
  y_k &=& x_k-F'(x_k)^{-1}F(x_k), \\
 z_k &=& y_k-\frac{1}{\beta}F'(x_k)^{-1}F(y_k), \\
  x_{k+1} &=& z_k- F'(x_k)^{-1}((2-1/\beta -\beta)F(y_k)+\beta F(z_k)).
\end{eqnarray*}

After applying it on quadratic polynomials and enforce the conjugacy map, the operator obtained  is:
\begin{eqnarray} \label{OPmariposa}
O_{\beta}(z)=z^4 \frac{-1+5\beta+14\beta z +14\beta z^2+6\beta z^3+ \beta z^4}{\beta+6\beta z+14\beta z^2+14\beta z^3+(-1+5\beta)z^4} .
\end{eqnarray}
This operator can be transformed into an operator of the type (\ref{op1}) by means of the change of the parameter $\alpha=\frac{-1+5\beta}{\beta}$:
\[
O_{\alpha}(z)=z^4 \frac{\alpha+14  z +14  z^2+6  z^3+   z^4}{1+6  z+14  z^2+14  z^3+\alpha z^4}.
\]
Its parameter plane (see Figure \ref{ppfamiliaC-mariposa} right) is similar to the previous one (Figure~\ref{ppfamiliaC-mariposa} left). Despite they correspond to different values of $n$, the main difference between these parameter planes comes from the different sign of the parameters $c$ and $\alpha$. This leads to a change of positions of the main interior bulb.

Another example of  $n=4$ is obtained from a sub-family of operators coming from the Ostrowski- Chun methods (see \cite{CV-OstrowskiChun}).
Applying this method on quadratic polynomials, and studying one of the subfamilies obtained, the resulting operator is:
\begin{eqnarray*}
OS4_{b}\left( z\right) &=& z^{4}\frac{4b-3-6 z-2 z^{2}+2z^3+z^4}{ 1+2z-2z^{2}-6z^{3}+\left( -3+4b\right) z^{4} }.
\end{eqnarray*}

The parameter plane of this family is shown in Figure \ref{familiaS4} (top). Let us realize that, in this case, the fixed point $z=1$ is superattracting. Therefore, this family has bad initial conditions for every parameter. Since  $z=1$ is superattracting for all values of the parameter, we cannot observe the disks that appears in the parameter planes of Figures \ref{ppfamiliaC-mariposa}. Since $z=1$ is superattracting, $z=1$ does not need to have a critical point (other than itself) in its basin of attraction. 
 As in Figure \ref{S5}, we use the green colour when the critical orbit converges to the basins of attraction of the fixed point $z=1$. 

\begin{figure}[h!!]
    \centering
    \subfigure{
    \begin{tikzpicture}
    \begin{axis}[width=7cm,  axis equal image, scale only axis,  enlargelimits=false, axis on top]
      \addplot graphics[xmin=-6,xmax=8,ymin=-7,ymax=7] {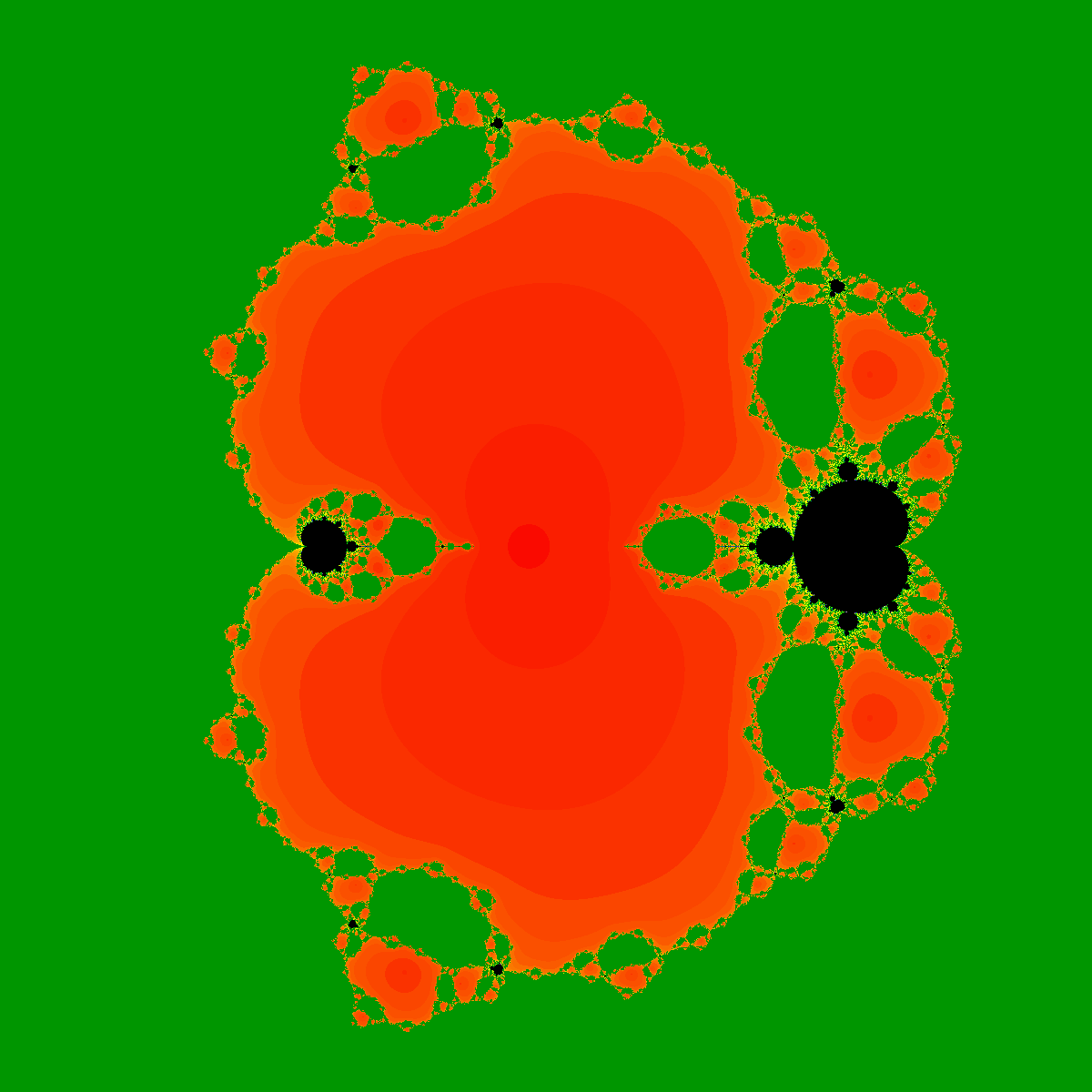};
    \end{axis}
  \end{tikzpicture}}

  \subfigure{
	\begin{tikzpicture}
		\begin{axis}[width=7cm,  axis equal image, scale only axis,  enlargelimits=false, axis on top]
			\addplot graphics[xmin=-4,xmax=3,ymin=-3.5,ymax=3.5] {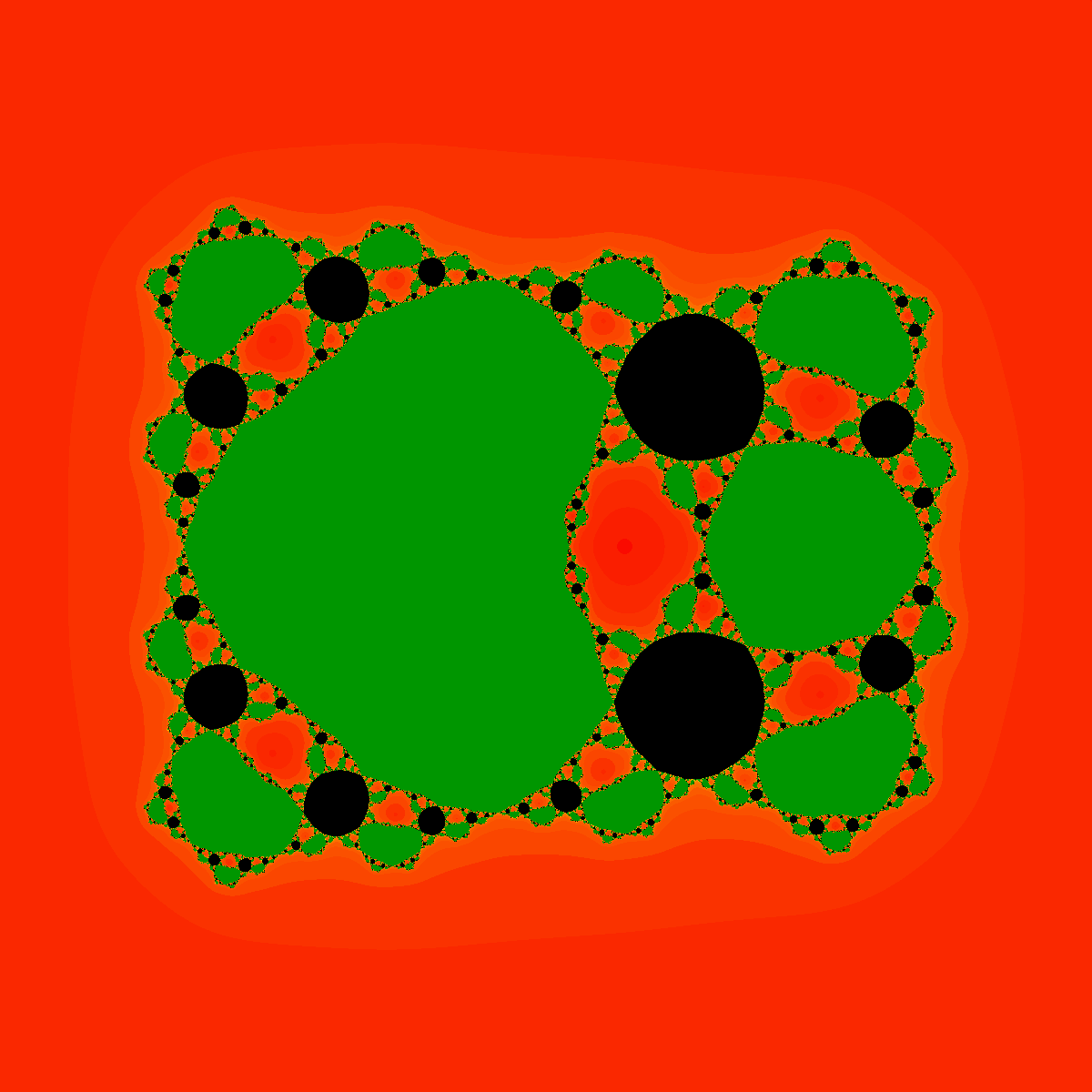};
		\end{axis}
\end{tikzpicture}}
\subfigure{
	\begin{tikzpicture}
		\begin{axis}[width=7cm,  axis equal image, scale only axis,  enlargelimits=false, axis on top]
			\addplot graphics[xmin=-3.5,xmax=2.5,ymin=-3,ymax=3] {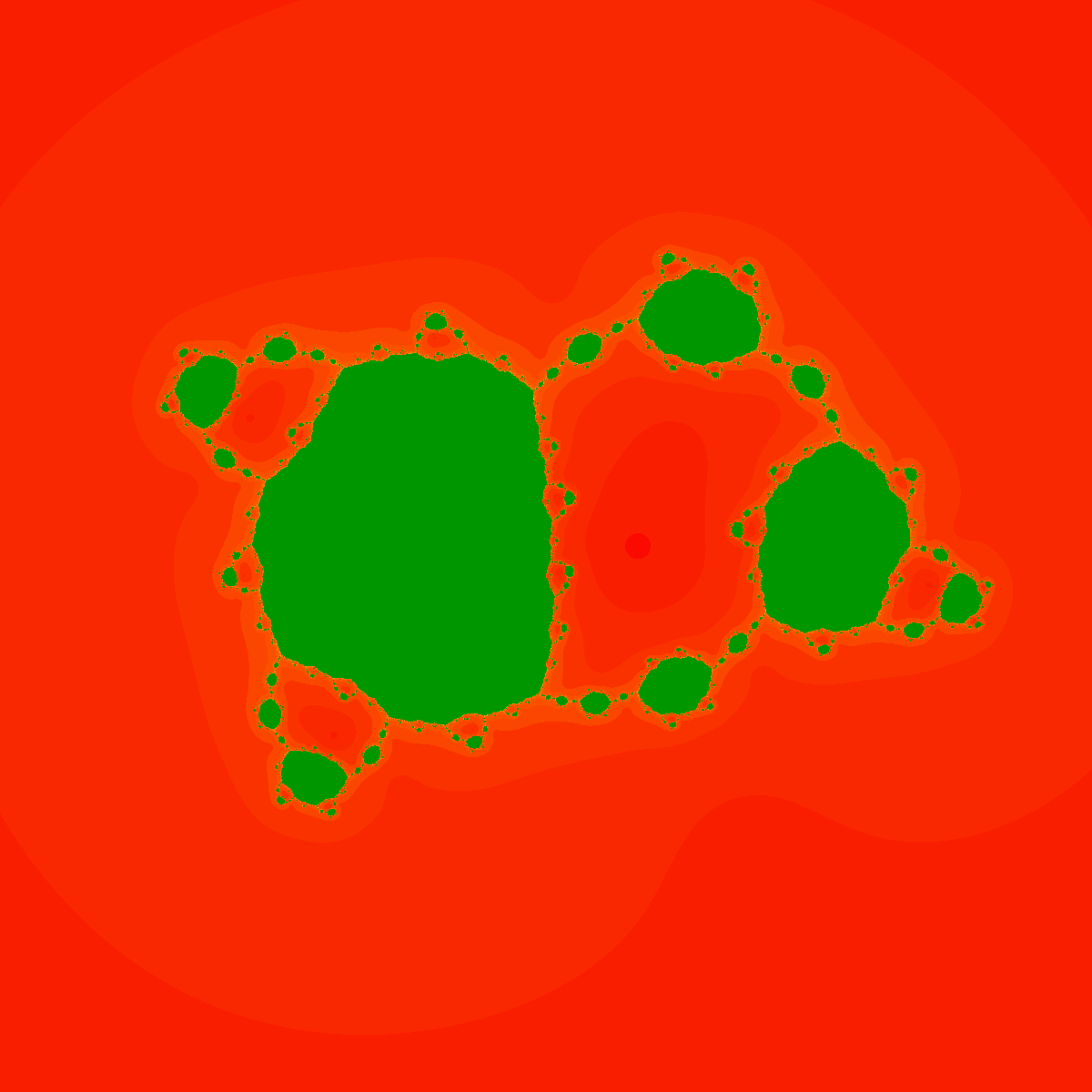};
		\end{axis}
\end{tikzpicture}}
    \caption{\small{The subfamily $S4$ for  $n=4$, $k=4$. The parameter plane (top) and dynamical planes for $b=5$ (bottom left) and $b=1+1i$ (bottom right).}}
    \label{familiaS4}
\end{figure}

In Figure \ref{familiaS4} (bottom) we also show two dynamical planes of this family. Colour green indicates the basin of attraction of  $z=1$. The left figure corresponds to a value of the parameter where the critical point belongs to the basin of attraction of another attracting cycle.  The right figure corresponds to a value of the parameter for which there is not other stable behaviour than the basins of attraction of the roots of the polynomial or $z=1$.

Let us remark that in Figures \ref{ppfamiliaC-mariposa} the region of parameters where $z=1$ is attracting corresponds to the exterior black zone. Similar parameter planes are also obtained in  \cite{CGMT-pg4}, \cite{ZCT-pg8}. Although in each of these papers a different numerical method is studied, the operator obtained after applied it on quadratic polynomials is of the type given in Equation (\ref{op1}). We also want to point out that for Figure \ref{familiaS4} (top) the unbounded region corresponds to a set of parameters for which the free critical points are captured by the basin of attraction of $z=1$.

Another example where the operator is given by Equation (\ref{op}) for $n=k=4$ can be found in the $S3$ family studied in paper \cite{CV-OstrowskiChun}, whose operator is:
\begin{eqnarray} \label{opS3}
OS3_{a}\left( z\right) &=& z^{4}\frac{5(14+5a)^2+(196+76a+9a^2)((14+5a)z+2(7+2a)z^2+(6+a)z^3+z^4)}{(1+(6+a)z+2(7+2a)z^2+(14+5a)z^3)(196+76a+9a^2)+5(14+5a)^2 z^4}.
\end{eqnarray}

Let us notice that, in this case,  there is not a linear relationship between the coefficients of the operator and the parameter of the family. 
The parameter plane can be seen in Figure \ref{familiaS3}.

\begin{figure}[h!!]
    \centering
    \subfigure{
    \begin{tikzpicture}
    \begin{axis}[width=7cm,  axis equal image, scale only axis,  enlargelimits=false, axis on top]
      \addplot graphics[xmin=-6.5,xmax=3.5,ymin=-5,ymax=5] {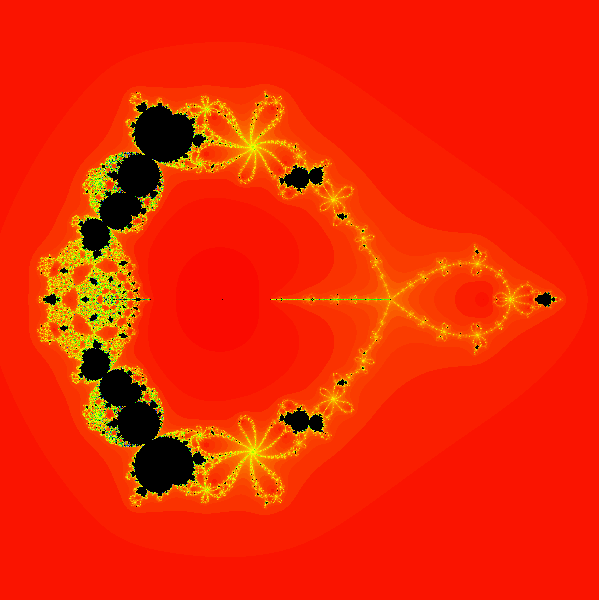};
    \end{axis}
  \end{tikzpicture}}
    \caption{\small{Parameter plane of the subfamily $S3$, $n=4$, $k=4$.}}
    \label{familiaS3}
\end{figure}

\section{Conclusions}

When  Newton-like algorithms are applied on quadratic polynomials, an intrinsic symmetry appears in the operator obtained.

Firstly, we study some symmetry properties of certain families of maps which can be used to obtain the expressions of Newton-like finding algorithms. We show that many of the current numerical methods can be obtained from the functions defined in the Theorem \ref{prop:symn}.

Moreover, the functions introduced in Theorem~\ref{prop:symn} can be used to generate new Newton-type algorithms for solving nonlinear equations. 

We conclude from Theorem \ref{teorema} that the operators obtained for the different families of numerical Newton-like methods have the same  expression, regardless of the method used, when they are applied on quadratic polynomials $p(z)=z^2-c$.

 We carry out a dynamical study for this generic operator. Moreover, except for the degenerate cases,  $z=1$ is always a fixed point of the operator and we study its stability region (Proposition \ref{estabilidad_1}).  We also prove that $z = -1$ is a fixed point when the degree of the operator is odd and we locate its stability region (Proposition \ref{estabilidad_-1}).

Along this paper, we exhibit some parameter planes  coming from different papers of literature where predicted results are observed.

\vspace{1cm}

\textbf{Author' contribution}. These authors contributed equally.

\vspace{0.5cm}
\textbf{Funding} The authors are supported the project UJI-B2019-18. The first and the last authors are also supported by by the grant PGC2018-095896-B-C22. The second author is supported by PID2020-118281GB-C32.
(MCIU/AEI/FEDER/UE).

\vspace{0.5cm}
\textbf{Data availability} Not applicable.

\vspace{0.5cm}
\textbf{Ethical Approval} Not applicable.

\vspace{0.5cm}
\textbf{Acknowledgments} Not applicable.

\vspace{0.5cm}
\textbf{Consent for publication} All authors give their consent for publication.

\vspace{0.5cm}
\textbf{Competing interests} The author declares no competing interests.

\vspace{1cm}
\bibliography{bibliografia}
\bibliographystyle{plain}

\end{document}